\newtheorem{theorem}{Theorem}
\newtheorem{lemma}{Lemma}
\setlist[enumerate]{leftmargin=8mm}
\setlist[itemize]{leftmargin=8mm}
\newcommand{\R}{\mathbb{R}}
\newcommand{\N}{\mathbb{N}}
\newcommand{\cP}{\mathcal{P}}
\newcommand{\cQ}{\mathcal{Q}}
\newcommand{\cR}{\mathcal{R}}
\newcommand{\cN}{\mathcal{N}}
\newcommand{\be}{\mathbf{e}}
\newcommand{\bJ}{\mathbf{J}}
\newcommand{\SI}{\Upsilon^{\cR_0}}
\newcommand{\ds}{\frac{\text{d}S}{\text{d}t}}
\newcommand{\di}{\frac{\text{d}I}{\text{d}t}}
\newcommand{\dr}{\frac{\text{d}R}{\text{d}t}}
\DeclareMathOperator{\Inc}{Inc}
\DeclareMathOperator{\Rec}{Rec}
\DeclareMathOperator{\TR}{TR}
\DeclareMathOperator{\HB}{HB}
\DeclareMathOperator{\SN}{SN}
\DeclareMathOperator{\FLC}{FLC}
\DeclareMathOperator{\HM}{HM}
\DeclareMathOperator{\tr}{tr}
\definecolor{darkgreen}{RGB}{34,177,76}
\begin{document}
\title{Mathematical analysis of an epidemic model for COVID-19: how important is the people's cautiousness level for eradication?}
\author{\small Benny Yong, Livia Owen, Jonathan Hoseana}
\address{\normalfont\small Department of Mathematics, Parahyangan Catholic University, Bandung 40141, Indonesia}
\email{benny\_y@unpar.ac.id\textnormal{, }livia.owen@unpar.ac.id\textnormal{, }j.hoseana@unpar.ac.id}
\date{}

\begin{abstract}
We construct an SIR-type model for COVID-19, incorporating as a parameter the susceptible individuals' cautiousness level. We determine the model's basic reproduction number, study the stability of the equilibria analytically, and perform a sensitivity analysis to confirm the significance of the cautiousness level. Fixing specific values for all other parameters, we study numerically the model's dynamics as the cautiousness level varies, revealing backward transcritical, Hopf, and saddle-node bifurcations of equilibria, as well as homoclinic and fold bifurcations of limit cycles with the aid of AUTO. Considering some key events affecting the pandemic in Indonesia, we design a scenario in which the cautiousness level varies over time, and show that the model exhibits a hysteresis, whereby, a slight cautiousness decrease could bring a disease-free state to endemic, and this is reversible only by a drastic cautiousness increase, thereby mathematically justifying the importance of a high cautiousness level for resolving the pandemic.

\smallskip\noindent\textsc{Keywords.} COVID-19; cautiousness; basic reproduction number; stability analysis; bifurcation; hysteresis

\smallskip\noindent\textsc{2020 MSC subject classification.} 34D05; 92D30; 37G15
\end{abstract}

\maketitle

\section{Introduction}\label{section:Introduction}

Since December 2019, the Coronavirus Disease 2019, popularly known as COVID-19, has spread, reportedly from a seafood market in Wuhan, People's Republic of China, to the entire world, with its impact persisting to the present \cite{Saxena}. On March 11th, 2020, the World Health Organization (WHO) declared a pandemic \cite{Saxena}, and as of February 6th, 2022, there were over 392 million reported COVID-19 cases, resulting in over 5.7 million fatalities \cite{WHO}.

A plethora of mathematical models has been used to study the spread of the disease. The Kermack-McKendrick SIR model \cite{KermackMcKendrick}, one of the simplest and best-known compartmental models for epidemics, assumes that the population size is constant, and that, at any given time, each individual belongs to exactly one of the following compartments which indicates the individual's status: susceptible (S), infected (I), and recovered (R), hence the model's name (Figure \ref{fig:SIR}). Denoting by $S=S(t)$, $I=I(t)$, and $R=R(t)$, respectively, the number of susceptible, infected, and recovered individuals at time $t\geqslant 0$, the model reads
$$\left\{\begin{array}{rcl}
\displaystyle\ds&=&\displaystyle-\beta SI,\\[0.25cm]
\displaystyle\di&=&\displaystyle\beta SI-\alpha I,\\[0.25cm]
\displaystyle\dr&=&\displaystyle\alpha I.
\end{array}\right.$$
Here it is assumed that the population transfer from S to I ---the disease incidence--- occurs at a rate which is proportional to the number of possible contacts of susceptible and infected individuals: $\beta SI$ for some $\beta>0$. On the other hand, the transfer from I to R ---the recovery--- is assumed to occur at a rate proportional to the number of infected individuals: $\alpha I$ for some $\alpha>0$ (Figure \ref{fig:SIR}).

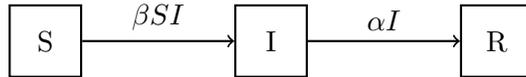
\begin{figure}
\centering
\begin{tikzpicture}
\node[rectangle,fill=white,draw=black,thick,minimum size=0.95cm] (S) at (0,0) {S};
\node[rectangle,fill=white,draw=black,thick,minimum size=0.95cm] (I) at (3,0) {I};
\node[rectangle,fill=white,draw=black,thick,minimum size=0.95cm] (R) at (6,0) {R};
\draw[->,thick] (S) edge node[above] {$\beta SI$} (I);
\draw[->,thick] (I) edge node[above] {$\alpha I$} (R);
\end{tikzpicture}
\caption{\label{fig:SIR} The compartment diagram of the SIR model.}
\end{figure}

The Kermack-McKendrick SIR model serves as a basis for more realistic SIR-type models. The latter result from various modifications, including the removal of the constant population size assumption (which implies the incorporation of birth/entrance and death/exit rates), as well as the use of alternative forms of incidence and recovery rates.

The incidence rate $\beta SI$ used in the Kermack-McKendrick SIR model is \textit{bilinear}: it increases linearly to infinity with $S$ and with $I$. In reality, as $S$ and/or $I$ become large, the incidence rate could experience inhibition, due to some change in the behaviour of susceptible individuals: they could become increasingly cautious of the spreading disease, and thus increasingly self-protective, reducing incidence. To model such a behaviour, one could employ incidence rates of the form $p SI/f(S,I)$, where $f$ is a function which increases with $S$ and with $I$. The simplest non-trivial forms are those in which $f$ is univariate and linear with a non-zero constant term: $f(S,I)=q+rI$ or $f(S,I)=q+rS$, where $q\neq 0$, which, after the rescaling $\beta = p/q$ and $\gamma=r/q$, result in the so-called \textit{saturated} incidence rates
$$\Inc_1(S,I)=\frac{\beta SI}{1+\gamma I}\qquad\text{and}\qquad\Inc_2(S,I)=\frac{\beta SI}{1+\gamma S},$$
respectively. Both rates increase with $S$ and with $I$. However, the rate $\Inc_1(S,I)$ saturates at $(\beta/\gamma)S$ for large $I$ and, since its denominator depends on $I$, has been used to model the situation whereby susceptible individuals increase their cautiousness ---and thus self-protectiveness--- \textit{as the number of infected individuals becomes large} \cite{CapassoSerio,ZhangLiu,ZhouFan,HuMaRuan,CuiQiuLiuHu,Ghoshetal,AlshammariKhan,AjbarAlqahtaniBoumaza}. By contrast, its alternative $\Inc_2(S,I)$ \cite{WeiChen,ZhangJinLiuZhang,KarBatabyal}, saturates at $(\beta/\gamma)I$ for large $S$ and has a denominator which is independent of $I$, thereby modelling the \textit{internal} cautiousness of susceptible individuals, the parameter $\gamma$ measuring the level of this cautiousness. (Functions similar to $\Inc_2(S,I)$ have also been used to model the influence of ``awareness programs'' on susceptible individuals' cautiousness; see \cite{Greenhalghetal,Dubey,ZuoLiuWang}.)

The Kermack-McKendrick SIR model also uses a \textit{linear} recovery rate: $\alpha I$, which does not take into account possible deceleration due to, e.g., the suboptimisation of hospital services. To take the latter into account, one could use recovery rates of the form
$$\Rec_1(I)=\alpha I +\frac{\delta I}{\omega +I}\qquad\text{and}\qquad \Rec_2(I)=\frac{\alpha I}{1+\rho I}.$$
Indeed, a well-accepted measure for the optimality of hospital services is the hospital bed-population ratio $\omega$ which has been incorporated to a number of models \cite{ShanZhu,CuiQiuLiuHu,AlshammariKhan,AjbarAlqahtaniBoumaza,Alqahtani} with recovery rates of the form $\Rec_1(I)$. Some other authors \cite{ZhangLiu,ZhouFan,Ghoshetal} used the alternative form $\Rec_2(I)$, which, as $I$ becomes large, increases and saturates at $\alpha/\rho$, capturing the behaviour of hospitals suboptimising services due to crowding. In populous countries, such as Indonesia, it is fitting to interpret the parameter $\rho$ ---whose large values imply low recovery rates--- as the hospitals' bed-occupancy rate (the number of hospitalised cases per isolation bed \cite{WHOI22Jul2020}); governments of such countries are struggling to maintain an ideal value of this quantity to keep health services optimal, setting up makeshift hospitals \cite{WHOI22Jul2020,WHOI23Jun2021}.

In this paper we propose a model for the spread of COVID-19 in a population which takes into account both the cautiousness level of susceptible individuals and the hospitals' bed-occupancy rate. The model is a compartmental, SIR-type model constructed based on the following assumptions.
\begin{enumerate}
\item[(i)] At any given time, each individual in the population, according to their status, belongs to exactly one of the following compartments: susceptible (S), infected (I), and recovered (R). We let $S=S(t)$, $I=I(t)$, and $R=R(t)$ be, respectively, the number of susceptible, infected, and recovered individuals at time $t\geqslant 0$, the dependence being differentiable. The size of the population at time $t$ is thus $N(t)=S(t)+I(t)+R(t)$.
\item[(ii)] \sloppy The rate of individuals entering the population ---due to births and migrations--- is a positive constant $\lambda>0$, and every newly-entered individual is susceptible. The rates of susceptible, infected, and recovered individuals exiting the population ---due to deaths--- are $\mu S$, $\left(\mu+\mu'\right) I$, and $\mu R$, respectively, where $\mu,\mu'>0$. The presence of $\mu'$ implies that infected individuals have a higher death rate than susceptible and recovered individuals.
\item[(iii)] Susceptible individuals become infected at the rate $\Inc_2(S,I)=\beta SI/(1+\gamma S)$. Here, we assume, for interpretability, that the \textit{cautiousness level} $\gamma$ of the susceptible individuals satisfies $\gamma\in[0,1]$, and that $\beta>0$.
\item[(iv)] Infected individuals become recovered at the rate $\Rec_2(I)=\alpha I/(1+\rho I)$. Here, we assume that $\rho\in[0,1]$, interpreting this parameter as the hospitals' bed-occupancy rate, and that $\alpha>0$.
\end{enumerate}
The above assumptions lead to the compartment diagram in Figure \ref{fig:presentmodel}, and thus to the following model:
\begin{equation}\label{eq:model}
\left\{\begin{array}{rcl}
\displaystyle\ds &=& \displaystyle\lambda - \mu S - \frac{\beta SI}{1+\gamma S},\\[0.4cm]
\displaystyle\di &=& \displaystyle-\mu I - \mu' I +\frac{\beta SI}{1+\gamma S}- \frac{\alpha I}{1+\rho I},\\[0.4cm]
\displaystyle\dr &=& \displaystyle-\mu R + \frac{\alpha I}{1+\rho I}.
\end{array}\right.
\end{equation}
Adding these equations, one verifies that the population size $N$ is \textit{not} constant. We shall study this model over the domain
$$\Omega:=\left\{(S(t),I(t),R(t))\in [0,\infty)^3:0 < S(t)+I(t)+R(t) \leqslant \frac{\lambda}{\mu},\,\,t\geqslant 0\right\}.$$

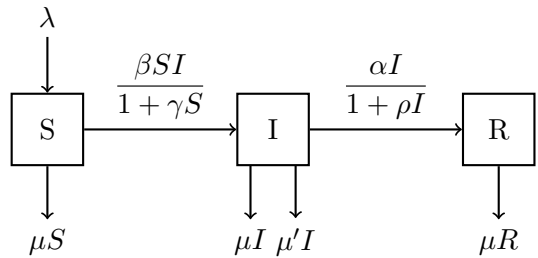
\begin{figure}
\centering
\begin{tikzpicture}
\node (muS) at (0,-1.5) {$\mu S$};
\node (muI) at (2.7,-1.5) {$\mu I$};
\node (muI') at (3.3,-1.5) {$\mu' I$};
\node (muR) at (6,-1.5) {$\mu R$};
\draw[<-,thick] (muI) edge (2.7,0);
\draw[<-,thick] (muI') edge (3.3,0);

\node[rectangle,fill=white,draw=black,thick,minimum size=0.95cm] (S) at (0,0) {S};
\node[rectangle,fill=white,draw=black,thick,minimum size=0.95cm] (I) at (3,0) {I};
\node[rectangle,fill=white,draw=black,thick,minimum size=0.95cm] (R) at (6,0) {R};
\node (lambda) at (0,1.5) {$\lambda$};
\draw[->,thick] (S) edge node[above] {$\displaystyle\frac{\beta SI}{1+\gamma S}$} (I);
\draw[->,thick] (I) edge node[above] {$\displaystyle\frac{\alpha I}{1+\rho I}$} (R);
\draw[->,thick] (lambda) edge (S);
\draw[<-,thick] (muS) edge (S);
\draw[<-,thick] (muR) edge (R);
\end{tikzpicture}
\caption{\label{fig:presentmodel} The compartment diagram of our model.}
\end{figure}

Let us now describe the organisation and main findings of this paper. In the upcoming section \ref{sec:analytic}, we study the model \eqref{eq:model} analytically. We first determine its disease-free equilibrium and basic reproduction number $\cR_0$ (subsection \ref{subsec:DFE}). Of note is the fact that $\cR_0$ depends on the cautiousness level $\gamma$ but not on the bed-occupancy rate $\rho$, meaning that, in an endemic state, the governments' effort of setting up increasingly many makeshift hospitals will never drive the system to a disease-free state if the citizens are not adequately cautious about protecting themselves from being infected. We also determine the possible numbers of its endemic equilibria (subsection \ref{subsec:DEE}), the linear stability of these equilibria and other dynamical properties of the model (subsection \ref{subsec:stability}), as well as the sensitivity index of the basic reproduction number with respect to each parameter (subsection \ref{subsec:sensitivity}). The latter confirms quantitatively that $\gamma$ is one of the parameters upon which $\cR_0$ depends most sensitively.

In section \ref{sec:numerics}, we fix a value for each of the parameters $\beta$, $\lambda$, $\mu$, $\mu'$, $\alpha$, $\rho$ and study the model \eqref{eq:model} numerically as $\gamma$ vary over its domain, presenting the results in three subsections. In subsection \ref{subsec:numerics1}, we show that the model exhibits a backward transcritical bifurcation at $\cR_0=1$, a saddle-node bifurcation, and a supercritical Hopf bifurcation whereby a stable endemic equilibrium becomes unstable and surrounded by a stable limit cycle. To discover the limit cycle's bifurcations, we use the AUTO software \cite{Doedeletal}, revealing a homoclinic bifurcation and a fold bifurcation of limit cycles. These are discussed in subsection \ref{subsec:numerics2}, where we also present plots of all qualitatively different orbital behaviours of the model corresponding to different values of $\gamma$ (Figure \ref{fig:simulations}). In subsection \ref{subsec:numerics3}, we design a scenario whereby $\gamma$ changes as a piecewise-constant function of $t$, and display the behaviour of $I$ resulting from these changes. We demonstrate that the model exhibits a hysteresis: in a near-threshold disease-free state, a small decrease of $\gamma$ could lead to an endemic state which is subsequently recoverable only by a large increase of $\gamma$. For the specified parameter values, $\gamma$ needs to exceed $0.35$ in order to guarantee the recovery.

In the final section \ref{sec:conclusions}, we conclude from our model that the pandemic cannot be resolved merely by increasing the hospitals' bed-occupancy rate; a serious effort towards a high cautiousness level of susceptible individuals is necessary. We also discuss avenues for further investigation.

\section{Analytic results}\label{sec:analytic}

Although the population size $N$ is not constant, the first two equations in our model \eqref{eq:model} do not depend on the third one; this means that the analysis of the model can be performed by considering only its first two equations,
\begin{equation}\label{eq:reducedmodel}
\left\{\begin{array}{rcl}
\displaystyle\ds &=& \displaystyle\lambda - \mu S - \frac{\beta SI}{1+\gamma S},\\[0.4cm]
\displaystyle\di &=& \displaystyle-\mu I - \mu' I +\frac{\beta SI}{1+\gamma S}- \frac{\alpha I}{1+\rho I},
\end{array}\right.
\end{equation}
over a domain $\Omega'\subseteq\R^2$ which is the projection of $\Omega$ on the $SI$-plane. In this section, we present a qualitative analysis of the reduced model \eqref{eq:reducedmodel}. This consists of, firstly, a computation of the model's disease-free equilibrium and basic reproduction number: the threshold parameter $\cR_0$ for which the disease-free equilibrium is stable if $\cR_0<1$ and unstable if $\cR_0>1$; this will be computed using the next-generation matrix approach \cite{DiekmannHeesterbeekMetz,DriesscheWatmough} (subsection \ref{subsec:DFE}). We also determine the possible numbers of the model's endemic equilibria, using Descartes' rule of signs \cite{Meserve} (subsection \ref{subsec:DEE}). Using tools from dynamical systems and bifurcation theory (see \cite{Kuznetsov,Martcheva,Robinson,Strogatz} for background), we next determine and study the linear stability of these equilibria, whether the transcritical bifurcation occurring at $\cR_0=1$ is forward or backward, and a property of a periodic orbit of our model (subsection \ref{subsec:stability}). Finally, using the sensitivity indices of the basic reproduction number \cite{ChitnisHymanCushing}, we determine the parameters upon which the basic reproduction number depends most sensitively (subsection \ref{subsec:sensitivity}).

\subsection{Disease-free equilibrium and basic reproduction number}\label{subsec:DFE}

\sloppy Letting $\be_0'=\left(S_0,0\right)\in\Omega'$ be the disease-free equilibrium of the reduced model \eqref{eq:reducedmodel} corresponding to the disease-free equilibrium $\be_0\in\Omega$ of the original model \eqref{eq:model}, the solving $\text{d}S_0/\text{d}t=0$ immediately gives $S_0=\lambda/\mu$. Thus, $$\be_0'=\left(\frac{\lambda}{\mu},0\right).$$
Since the next-generation matrix \cite[page 33]{DriesscheWatmough} for \eqref{eq:reducedmodel} is the $1\times 1$ matrix $\mathbf{F}\mathbf{V}^{-1}$, where
$$\mathbf{F}=\left[\left.\frac{\partial}{\partial I}\left(\frac{\beta SI}{1+\gamma S}\right)\right|_{(S,I)=\be_0'}\right]\quad\text{and}\quad \mathbf{V}=\left[\left.\frac{\partial}{\partial I}\left(\mu I + \mu' I + \frac{\alpha I}{1+\rho I}\right)\right|_{(S,I)=\be_0'}\right],$$
its only entry is the model's basic reproduction number:
\begin{equation}\label{eq:BRN}
\cR_0 = \frac{\beta\lambda}{(\mu+\gamma\lambda)\left(\mu+\mu'+\alpha\right)}.
\end{equation}
Notice that $\cR_0$ depends on $\gamma$ (as well as on $\beta$, $\lambda$, $\mu$, $\mu'$, and $\alpha$), but \textit{not} on $\rho$. Thus, the value of $\cR_0$ cannot be suppressed by merely reducing the hospitals' bed-occupancy rate. Ways to suppress $\cR_0$ are the following.
\begin{enumerate}
\item[(i)] Seek to achieve $\lambda=0$. In practice, this means the population's government declaring a total lockdown: outside individuals are absolutely prevented from entering the population. While such a policy is effective to eradicate the disease, it could lead to a substantial damage in the economic sector.
\item[(ii)] Seek to achieve $\beta=0$, i.e., an absolute prevention of interindividual interactions. Again, this eradicates the disease, but jeopardises the economy.
\item[(iii)] Seek to increase $\alpha$, i.e., to accelerate the treatment of infected individuals. Although possible, this is largely constrained by the limitedness of medical resources.
\end{enumerate}
Since we clearly do not wish to increase $\mu$ or $\mu'$, only one more option is available.
\begin{enumerate}[resume]
\item[(iv)] Seek to increase $\gamma$, i.e., the cautiousness of susceptible individuals.
\end{enumerate}
In subsection \ref{subsec:sensitivity}, we shall confirm quantitatively that, for the values of parameters \eqref{eq:parameters} used in our numerical simulations (section \ref{sec:numerics}), $\gamma$ is one of the parameters upon which $\cR_0$ depends most sensitively. Thus, governments of countries that are able to increase the citizens' cautiousness level (through, e.g., dissemination and enforcement of health protocols) have a reasonable chance of success in resolving the pandemic.

\bigskip\subsection{Endemic equilibria}\label{subsec:DEE}

We now characterise and determine the number of the endemic equilibria of \eqref{eq:reducedmodel}. Let $\mathbb{N}$ be the set of \textit{positive} integers, and let $\be_n'=\left(S_n,I_n\right)\in\Omega'$, where $n\in\N$, be an endemic equilibrium of \eqref{eq:reducedmodel} corresponding to an endemic equilibrium $\be_n\in\Omega$ of \eqref{eq:model}. The conditions $\text{d}S_n/\text{d}t=0$ and $\text{d}I_n/\text{d}t=0$ are equivalent to
$$\lambda - \mu S_n - \frac{\beta S_nI_n}{1+\gamma S_n}=0\qquad\text{and}\qquad -\mu I_n - \mu' I_n +\frac{\beta S_nI_n}{1+\gamma S_n}- \frac{\alpha I_n}{1+\rho I_n}=0,$$
respectively. Adding these gives
\begin{equation}\label{eq:Snendemic}
S_n=\frac{\lambda}{\mu}-I_n\left[\frac{\alpha}{\mu(1+\rho I_n)}+\frac{\mu+\mu'}{\mu}\right],
\end{equation}
which, together with the condition $\text{d}S_n/\text{d}t=0$ and the fact that $I_n\neq 0$, gives
\begin{equation}\label{eq:I*cubic}
a{I_n}^3+b{I_n}^2+cI_n+d=0,
\end{equation}
where
\begin{align*}
a &:= \rho^2\left(\mu+\mu'\right)[\left(\mu+\mu'\right)\gamma-\beta],\\
b &:= \rho^2(\mu+\gamma\lambda)\left[\left(\mu+\mu'+\alpha\right)\left(\mathcal{R}_0-1\right)+\alpha\right]+\rho\alpha\beta+2\rho\left(\mu+\mu'+\alpha\right) \left[\left(\mu+\mu'\right)\gamma-\beta\right],\\
c &:= 2\rho(\mu+\gamma\lambda)\left(\mu+\mu'+\alpha\right)\left(\mathcal{R}_0-1\right)+\left(\mu+\mu'+\alpha\right)\left[\left(\mu+\mu'\right)\gamma-\beta\right]
+\gamma \alpha\left(\mu+\mu'+\alpha\right)\\&\phantom{:=}+\rho\alpha(\mu+\gamma\lambda),\\
d &:= (\mu+\gamma\lambda)\left(\mu+\mu'+\alpha\right)\left(\mathcal{R}_0-1\right).
\end{align*}

It is straightforward to see that
\begin{eqnarray*}
\cR_0<1&\Leftrightarrow&d<0,\\
\cR_0=1&\Leftrightarrow&d=0,\\
\cR_0>1&\Leftrightarrow&d>0;
\end{eqnarray*}
that
\begin{eqnarray*}
\left(\mu+\mu'\right)\gamma-\beta<0&\Leftrightarrow&a<0,\\
\left(\mu+\mu'\right)\gamma-\beta>0&\Leftrightarrow&a>0;
\end{eqnarray*}
and that
$$\cR_0\geqslant 1\,\,\,\Rightarrow\,\,\,a<0.$$
By Descartes' rule of signs \cite[Theorem 4.10]{Meserve}, the number $\cN$ of positive roots of \eqref{eq:I*cubic}, counting multiplicities, is bounded above by the number of times the non-zero coefficients change sign, and differs from it by an even number. This gives the possible values of $\cN$ in various cases, presented in Table \ref{tab:Descartes}.

\begin{table}
\centering\renewcommand{\arraystretch}{1.4}
\scalebox{0.95}{\begin{tabular}{|l|l|l|l|}
\hline
$\cR_0$ and $d$ & $a$ & $b$ and $c$ & Possible values of $\cN$\\
\hhline{|=|=|=|=|}
$\cR_0<1\,\,\Leftrightarrow\,\,d<0$ & $a<0$ & $b<0$ and $c<0$ & $0$\\
\cline{3-4}
                                      &         & $b>0$ or $c>0$  & $0$, $2$\\
\cline{2-4}
                                      & $a>0$ & $b<0$ and $c>0$ & $1$, $3$\\
\cline{3-4}
                                      &         & $b>0$ or $c<0$  & $1$\\
\hline
$\cR_0=1\,\,\Leftrightarrow\,\,d=0$ & $a<0$ & $b<0$ and $c<0$ & $0$\\
\cline{3-4}
                                      &         & $c>0$  & $1$\\
\cline{3-4}
                                      &         & $b>0$ and $c<0$  & $2$\\
\hline
$\cR_0>1\,\,\Leftrightarrow\,\,d>0$ & $a<0$ & $b>0$ and $c<0$ & $1$, $3$\\
\cline{3-4}
                                      &         & $b<0$ or $c>0$  & $1$\\
\hline
\end{tabular}}
\caption{\label{tab:Descartes} Possible values of $\cN$ for various values of $\cR_0$ and $a$, $b$, $c$, $d$, assuming that $b$ and $c$ are non-zero.}
\end{table}

If a positive root $I_n$ of \eqref{eq:I*cubic} exists, the corresponding value of $S_n$ obtained from \eqref{eq:Snendemic} is non-negative (i.e., $\be_n'\in\Omega'$) if and only if
$$\frac{\lambda}{\mu}>I_n\left[\frac{\alpha}{\mu\left(1+\rho I_n\right)}+\frac{\mu+\mu'}{\mu}\right].$$

\bigskip\subsection{Linear stability and bifurcation analysis}\label{subsec:stability}

In this subsection, we first prove that the stability of the disease-free equilibrium is indeed determined by the basic reproduction number (Theorem \ref{prop:DFE}), and characterise the stability of the endemic equilibria (Theorem \ref{prop:DEE}). Subsequently, we establish a sufficient condition for the occurrence of forward and backward transcritical bifurcations (Theorem \ref{prop:forwardbackwardbifurcation}) and a property satisfied by a periodic orbit of our model if it exists (Theorem \ref{prop:periodicorbit}).

A stability criterion of the disease-free equilibrium in terms of the basic reproduction number is derived using the standard result that an equilibrium of a planar system is \textit{locally asymptotically stable} if both eigenvalues of the system's Jacobian evaluated at that equilibrium have negative real parts, and \textit{unstable} if at least one eigenvalue has a positive real part \cite[Theorem 4.6]{Robinson}. For completeness, we also use the term \textit{semistable} in the situation whereby the one of the eigenvalues is zero and the other is negative.\bigskip

\begin{theorem}\label{prop:DFE}
The disease-free equilibrium $\be_0$ is locally asymptotically stable if $\mathcal{R}_0<1$, semistable if $\cR_0=1$, and unstable if $\cR_0>1$.
\end{theorem}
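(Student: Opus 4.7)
The plan is to linearise the reduced system \eqref{eq:reducedmodel} at $\be_0' = (\lambda/\mu, 0)$, read off the eigenvalues of the resulting Jacobian, and then translate each sign condition on the eigenvalues into the corresponding statement about $\cR_0$ via the explicit formula \eqref{eq:BRN}.

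First I would compute the Jacobian $\mathbf{J}(S,I)$ of the vector field on the right-hand side of \eqref{eq:reducedmodel}. The relevant partial derivatives are routine: differentiating $\beta SI/(1+\gamma S)$ with respect to $S$ gives $\beta I/(1+\gamma S)^2$, and differentiating $\alpha I/(1+\rho I)$ with respect to $I$ gives $\alpha/(1+\rho I)^2$. Evaluating at $\be_0'$, where $I=0$ and $S=\lambda/\mu$, the off-diagonal entry in the lower-left vanishes, so $\mathbf{J}(\be_0')$ is upper triangular with diagonal entries
\begin{equation*}
\lambda_1 = -\mu \qquad\text{and}\qquad \lambda_2 = -\left(\mu+\mu'+\alpha\right) + \frac{\beta\lambda}{\mu+\gamma\lambda}.
\end{equation*}
These are precisely the eigenvalues of $\mathbf{J}(\be_0')$.

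The first eigenvalue $\lambda_1=-\mu$ is strictly negative by assumption. For the second, I would factor out $\mu+\mu'+\alpha>0$ and use \eqref{eq:BRN} to rewrite
\begin{equation*}
\lambda_2 = \left(\mu+\mu'+\alpha\right)\left(\frac{\beta\lambda}{(\mu+\gamma\lambda)(\mu+\mu'+\alpha)}-1\right) = \left(\mu+\mu'+\alpha\right)\left(\cR_0-1\right),
\end{equation*}
so that $\operatorname{sign}(\lambda_2)=\operatorname{sign}(\cR_0-1)$. If $\cR_0<1$ then both eigenvalues are real and negative, so by the standard planar linearisation criterion cited just before the theorem, $\be_0$ is locally asymptotically stable; if $\cR_0>1$ then $\lambda_2>0$ and $\be_0$ is unstable; and if $\cR_0=1$ then $\lambda_1=-\mu<0$ while $\lambda_2=0$, which matches the paper's definition of \emph{semistable}.

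There is no real obstacle here: the calculation is standard and the triangularity of $\mathbf{J}(\be_0')$ makes the eigenvalue extraction immediate. The one point worth flagging is the borderline case $\cR_0=1$, where one eigenvalue is zero and linearisation alone cannot decide genuine (nonlinear) stability; however, since the theorem only asserts semistability in the sense defined in the preceding paragraph of the paper, the sign information on $(\lambda_1,\lambda_2)$ already suffices, and no centre-manifold computation is needed at this stage (that finer analysis is deferred to Theorem~\ref{prop:forwardbackwardbifurcation}).
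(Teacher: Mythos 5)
Your proposal is correct and follows essentially the same route as the paper: compute the Jacobian at $\be_0'$, observe it is triangular with eigenvalues $-\mu$ and $(\mu+\mu'+\alpha)(\cR_0-1)$, and read off the three cases from the sign of the second eigenvalue. The only cosmetic difference is that you factor the second eigenvalue as $(\mu+\mu'+\alpha)(\cR_0-1)$ while the paper leaves it as $\bigl(\beta\lambda-(\mu+\gamma\lambda)(\mu+\mu'+\alpha)\bigr)/(\mu+\gamma\lambda)$; both give the same sign analysis.
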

\begin{proof}
The Jacobian of the model \eqref{eq:reducedmodel} evaluated at $(S,I)=\be_0'$, i.e.,
$$\bJ_0 = \left(\begin{array}{cc}
-\mu & -\frac{\beta\lambda}{\mu+\gamma\lambda}\\
0& -\mu-\mu'-\alpha+\frac{\beta\lambda}{\mu+\gamma\lambda}
\end{array}\right),$$
has two real eigenvalues:
$$-\mu\qquad\text{and}\qquad \frac{\beta\lambda - (\mu+\gamma\lambda)\left(\mu+\mu'+\alpha\right)}{\mu+\gamma\lambda}.$$
Since the former is negative, the disease-free equilibrium $\be_0$ is locally asymptotically stable if the latter is negative, i.e., $\mathcal{R}_0<1$, is semistable if the latter is zero, i.e., $\mathcal{R}_0=1$, and is unstable (in fact, a saddle point) if latter is positive, i.e., $\mathcal{R}_0>1$.
\end{proof}\bigskip

\noindent We shall make use of this theorem in subsection \ref{subsec:numerics1}.

Next, we turn our attention to the stability of the endemic equilibria, which will be classified using the well-known trace-determinant criterion of stability of equilibria of planar systems \cite[Theorem 4.3]{Robinson} and the Grobman-Hartman theorem \cite[Theorem 12.10]{Robinson}.\bigskip

\begin{lemma}
Let $n\in\N$, and let
\begin{equation}\label{eq:PnQn}
\begin{array}{rl}
\displaystyle\cP_n&\displaystyle:=2\mu+\mu'+\frac{\alpha}{\left(1+\rho I_n\right)^2}+\frac{\beta I_n}{\left(1+\gamma S_n\right)^2}-\frac{\beta S_n}{1+\gamma S_n},\\[0.4cm]
\displaystyle\cQ_n&\displaystyle:=\mu^2+\mu\mu'+\frac{\mu \alpha}{\left(1+\rho I_n\right)^2}+\frac{\left(\mu+\mu'\right)\beta I_n}{\left(1+\gamma S_n\right)^2}+\frac{\beta \alpha I_n}{\left(1+\gamma S_n\right)^2\left(1+\rho I_n\right)^2}-\frac{\mu\beta S_n}{1+\gamma S_n}.
\end{array}
\end{equation}
The following hold for the trivial equilibrium of the variational system of \eqref{eq:reducedmodel} near $\be_n'$.
\begin{enumerate}
\item[\textnormal{(i)}] If $\cQ_n<0$, then the equilibrium is a saddle point, and hence unstable.
\item[\textnormal{(ii)}] If $\cQ_n>0$ and $\cP_n<0$, then the fixed point is unstable; it is an unstable node if ${\cP_n}^2-4\cQ_n>0$, a degenerate unstable node if ${\cP_n}^2-4\cQ_n=0$, and an unstable focus if ${\cP_n}^2-4\cQ_n<0$.
\item[\textnormal{(iii)}] If $\cQ_n>0$ and $\cP_n>0$, then the equilibrium is asymptotically stable; it is a stable node if ${\cP_n}^2-4\cQ_n>0$, a degenerate stable node if ${\cP_n}^2-4\cQ_n=0$, and a stable focus if ${\cP_n}^2-4\cQ_n<0$.
\item[\textnormal{(iv)}] If $\cQ_n>0$ and $\cP_n=0$, then the equilibrium is $L$-stable but not asymptotically stable.
\item[\textnormal{(v)}] If $\cQ_n=0$, then the equilibrium is unstable if $\cP_n<0$, and is $L$-stable but not asymptotically stable if $\cP_n>0$.
\end{enumerate}
\end{lemma}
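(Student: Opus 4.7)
The plan is to reduce the five-case classification to the well-known trace--determinant criterion for planar linear systems by showing that $\cP_n$ and $\cQ_n$ are, respectively, the negative of the trace and the determinant of the Jacobian $\bJ_n$ of the right-hand side of \eqref{eq:reducedmodel} at $\be_n'=(S_n,I_n)$. Concretely, I would first differentiate the two right-hand sides of \eqref{eq:reducedmodel} with respect to $S$ and $I$ and substitute $(S,I)=(S_n,I_n)$. The $(1,1)$-entry is $-\mu-\beta I_n/(1+\gamma S_n)^2$, the $(1,2)$-entry is $-\beta S_n/(1+\gamma S_n)$, the $(2,1)$-entry is $\beta I_n/(1+\gamma S_n)^2$, and the $(2,2)$-entry is $-(\mu+\mu')+\beta S_n/(1+\gamma S_n)-\alpha/(1+\rho I_n)^2$. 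Summing the diagonal yields $\operatorname{tr}\bJ_n=-\cP_n$ on inspection (no equilibrium condition is needed).

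Next I would compute $\det \bJ_n$ by expanding the $2\times 2$ product; the cross-term $\beta^2 S_n I_n/(1+\gamma S_n)^3$ produced by the diagonal product is cancelled exactly by $(-1)$ times the product of the off-diagonal entries, and the six surviving terms collect precisely into the expression defining $\cQ_n$ in \eqref{eq:PnQn}. This is a routine but somewhat delicate bookkeeping exercise, and is where the only real algebraic obstacle lies; one must carefully keep track of signs on the terms $-\mu\beta S_n/(1+\gamma S_n)$, $(\mu+\mu')\beta I_n/(1+\gamma S_n)^2$, and $\alpha\beta I_n/[(1+\gamma S_n)^2(1+\rho I_n)^2]$ to see that they come out as written.

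With $\operatorname{tr}\bJ_n=-\cP_n$ and $\det\bJ_n=\cQ_n$ in hand, the eigenvalues of $\bJ_n$ are the roots of $\lambda^2+\cP_n\lambda+\cQ_n=0$, namely $\lambda_\pm=\bigl(-\cP_n\pm\sqrt{\cP_n^{\,2}-4\cQ_n}\bigr)/2$. I would then run through the five cases by invoking \cite[Theorem 4.3]{Robinson}: if $\cQ_n<0$ the eigenvalues are real with opposite signs, giving the saddle of case (i); if $\cQ_n>0$ then $\cP_n^{\,2}-4\cQ_n$ decides node vs.\ focus while the sign of $\cP_n$ dictates whether the common sign of $\operatorname{Re}\lambda_\pm$ is positive (unstable, case (ii)), negative (asymptotically stable, case (iii)), or zero (a linear centre with purely imaginary eigenvalues $\pm i\sqrt{\cQ_n}$, case (iv), which is Lyapunov stable but not asymptotically stable); and if $\cQ_n=0$ the eigenvalues are $0$ and $-\cP_n$, yielding Lyapunov but not asymptotic stability when $\cP_n>0$ and instability when $\cP_n<0$, establishing case (v).

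Since the statement concerns only the \emph{variational} (linearised) system at $\be_n'$, no appeal to the Grobman--Hartman theorem is required at this stage; the linear analysis above is self-contained. The theorem can, and will, be invoked separately to transfer cases (i)--(iii) (where $\bJ_n$ is hyperbolic) to the nonlinear system \eqref{eq:reducedmodel} itself, but that is not needed for the present lemma.
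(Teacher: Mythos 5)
Your proposal is correct and follows the paper's proof essentially verbatim: both compute the Jacobian $\bJ_n$ of \eqref{eq:reducedmodel} at $\be_n'$, verify $\tr(\bJ_n)=-\cP_n$ and $\det(\bJ_n)=\cQ_n$ (your simplified $(1,1)$-entry $-\mu-\beta I_n/(1+\gamma S_n)^2$ agrees with the paper's unsimplified form, and the cancellation of the cross-term in the determinant is exactly as you describe), and then invoke the trace--determinant criterion \cite[Theorem 4.3]{Robinson}. Your closing remark that Grobman--Hartman is not needed for the lemma itself but only for the subsequent transfer to the nonlinear system also matches the paper's organisation.
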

\begin{proof}
The Jacobian of the model \eqref{eq:reducedmodel} evaluated at $(S,I)=\be_n'$ is
$$\bJ_n=\left(\begin{array}{cc}
-\mu -\frac{\beta I_n}{1+\gamma S_n} + \frac{\beta\gamma S_n I_n}{\left(1+\gamma S_n\right)^2} & -\frac{\beta S_n}{1+\gamma S_n}\\
\frac{\beta I_n}{1+\gamma S_n}-\frac{\beta\gamma S_n I_n}{\left(1+\gamma S_n\right)^2} & -\mu-\mu'+\frac{\beta S_n}{1+\gamma S_n}-\frac{\alpha}{1+\rho I_n}+\frac{\rho\alpha I_n}{\left(1+\rho I_n\right)^2}
\end{array}\right).$$
Since $\tr\left(\bJ_n\right)=-\cP_n$ and $\det\left(\bJ_n\right)=\cQ_n$ by straightforward computation, the theorem is an immediate consequence of \cite[Theorem 4.3]{Robinson}.
\end{proof}\bigskip

By the Grobman-Hartman theorem \cite[Theorem 12.10]{Robinson}, the above lemma immediately implies the following theorem concerning the type and stability of $\be_n$.\bigskip

\begin{theorem}\label{prop:DEE}
Let $n\in\N$.
\begin{enumerate}
\item[\textnormal{(i)}] If $\cQ_n<0$, then $\be_n$ is a saddle point, and hence unstable.
\item[\textnormal{(ii)}] If $\cQ_n>0$ and $\cP_n<0$, then $\be_n$ is unstable; it is an unstable node if ${\cP_n}^2-4\cQ_n>0$, and an unstable focus if ${\cP_n}^2-4\cQ_n<0$.
\item[\textnormal{(iii)}] If $\cQ_n>0$ and $\cP_n>0$, then $\be_n$ is asymptotically stable; it is a stable node if ${\cP_n}^2-4\cQ_n>0$, and a stable focus if ${\cP_n}^2-4\cQ_n<0$.
\end{enumerate}
\end{theorem}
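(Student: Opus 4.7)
The plan is to deduce Theorem \ref{prop:DEE} from the preceding lemma by invoking the Grobman--Hartman theorem, exactly as foreshadowed in the paragraph preceding the theorem statement. First I would observe that the three cases listed in Theorem \ref{prop:DEE} correspond precisely to the \emph{hyperbolic} cases of the lemma: in case (i) the Jacobian $\bJ_n$ has $\det(\bJ_n)=\cQ_n<0$, so its two real eigenvalues have opposite signs and are in particular non-zero; in cases (ii) and (iii) we have $\cQ_n>0$ and $\cP_n\neq 0$, so neither eigenvalue lies on the imaginary axis. Thus $\be_n'$ is a hyperbolic equilibrium of the planar $C^1$ vector field defined by the right-hand side of \eqref{eq:reducedmodel}.

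Next I would apply the Grobman--Hartman theorem \cite[Theorem 12.10]{Robinson}, which provides a homeomorphism from a neighbourhood of $\be_n'$ in $\Omega'$ onto a neighbourhood of the origin in $\mathbb{R}^2$ conjugating the flow of \eqref{eq:reducedmodel} to the flow of the linear system $\dot{\mathbf{x}}=\bJ_n\mathbf{x}$. Under this topological conjugacy, each of the qualitative properties appearing in the lemma transfers to $\be_n'$ (and hence to $\be_n$, since the $R$-equation in \eqref{eq:model} decouples and contributes only an additional attracting direction of constant rate $-\mu$, which does not affect the planar classification): saddle points remain saddle points, (un)stable nodes remain (un)stable nodes, and (un)stable foci remain (un)stable foci. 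Reading off the corresponding items (i)--(iii) of the lemma then yields items (i)--(iii) of the theorem.

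The only subtlety I would flag is the disappearance of the \emph{degenerate} node case from the lemma's statement in the theorem. This is intentional and unproblematic: Grobman--Hartman only guarantees topological (not $C^1$) equivalence, and a degenerate node is topologically indistinguishable from an ordinary node of the same stability type; lumping them together in the theorem is therefore correct. Likewise, the non-hyperbolic items (iv) and (v) of the lemma (with $\cP_n=0$ or $\cQ_n=0$) are deliberately excluded from the theorem, because Grobman--Hartman does not apply there.

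The main obstacle is not computational but expository: making clear that (a) the preceding lemma has already done all the linear-algebra work of computing $\tr(\bJ_n)=-\cP_n$ and $\det(\bJ_n)=\cQ_n$ and classifying the linearisation, and (b) the role of the theorem is purely to lift that classification from the linearised to the nonlinear flow. Once these two points are explicit, the proof reduces to a single sentence invoking Grobman--Hartman, which is exactly the style used elsewhere in the paper.
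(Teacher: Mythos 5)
Your proposal is correct and follows exactly the paper's own route: the paper likewise deduces Theorem \ref{prop:DEE} from the preceding lemma in a single step by invoking the Grobman--Hartman theorem. Your additional remarks on the hyperbolicity of the three retained cases and the harmless merging of the degenerate-node subcase are sound elaborations of what the paper leaves implicit.
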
\bigskip

\noindent This theorem will also be applied in subsection \ref{subsec:numerics1}.

In the case of $\cR_0<1$, the disease-free equilibrium $\be_0$, being stable, may or may not be the only existing equilibrium of the model \eqref{eq:reducedmodel}. This depends on whether the model exhibits a \textit{forward} or a \textit{backward transcritical} bifurcation at $\cR_0=1$, i.e., whether $\partial I_n/\partial\cR_0$ is positive or negative at $\left(\cR_0,I_n\right)=(1,0)$ \cite[section 7.5]{Martcheva}. In the case of a backward bifurcation, besides the stable disease-free equilibrium, an unstable endemic equilibrium exists for $\cR_0<1$. We derive sufficient conditions for these bifurcations.\bigskip

\begin{theorem}\label{prop:forwardbackwardbifurcation}
At $\cR_0=1$, the model \eqref{eq:reducedmodel} exhibits a forward bifurcation if $$\beta<\frac{\mu\left(\mu+\mu'+\alpha\right)^3}{\alpha\lambda^2\rho},$$
and a backward bifurcation if
$$\beta>\frac{\mu\left(\mu+\mu'+\alpha\right)^3}{\alpha\lambda^2\rho}.$$
\end{theorem}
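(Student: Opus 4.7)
The plan is to apply the criterion recalled just before the statement: a forward (respectively, backward) bifurcation at $\cR_0=1$ is signalled by $\partial I_n/\partial\cR_0$ being positive (respectively, negative) at $(I_n,\cR_0)=(0,1)$. I would view the cubic \eqref{eq:I*cubic} as an implicit equation $F(I_n,\cR_0):=aI_n^3+bI_n^2+cI_n+d=0$ defining $I_n$ as a function of $\cR_0$ near the bifurcation point, treating the remaining parameters as the fixed quantities around which whichever parameter drives $\cR_0$ through $1$ is being varied. Implicit differentiation and evaluation at $I_n=0$ kills every monomial carrying an $I_n$ factor, so the entire contribution from $F_{\cR_0}$ collapses to $\partial d/\partial\cR_0=(\mu+\gamma\lambda)(\mu+\mu'+\alpha)$, yielding the clean relation
$$c\big|_{\cR_0=1}\cdot\frac{\partial I_n}{\partial\cR_0}\bigg|_{(0,1)}=-(\mu+\gamma\lambda)(\mu+\mu'+\alpha).$$
The right-hand side is strictly negative, so the sign of $\partial I_n/\partial\cR_0$ at the bifurcation point is opposite to the sign of $c|_{\cR_0=1}$.

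The core step is then to evaluate $c|_{\cR_0=1}$. The two summands of $c$ carrying a $\cR_0-1$ factor vanish, and the remaining three regroup as
$$c\big|_{\cR_0=1}=(\mu+\mu'+\alpha)\bigl[\gamma(\mu+\mu'+\alpha)-\beta\bigr]+\rho\alpha(\mu+\gamma\lambda).$$
The condition $\cR_0=1$ is equivalent to $\beta\lambda=(\mu+\gamma\lambda)(\mu+\mu'+\alpha)$, which I would use twice: to write $\mu+\gamma\lambda=\beta\lambda/(\mu+\mu'+\alpha)$ in the second summand, and to obtain $\gamma(\mu+\mu'+\alpha)-\beta=-\mu(\mu+\mu'+\alpha)/\lambda$ in the first. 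Substituting and placing everything over the common denominator $\lambda(\mu+\mu'+\alpha)$ should produce
$$c\big|_{\cR_0=1}=\frac{\rho\alpha\beta\lambda^2-\mu(\mu+\mu'+\alpha)^3}{\lambda(\mu+\mu'+\alpha)}.$$

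Reading off the signs then delivers the dichotomy: $c|_{\cR_0=1}<0$ precisely when $\beta<\mu(\mu+\mu'+\alpha)^3/(\alpha\lambda^2\rho)$, giving $\partial I_n/\partial\cR_0>0$ and hence a forward bifurcation, while the reverse inequality gives a backward bifurcation. I do not anticipate any serious obstacle; the only bookkeeping subtlety is in the implicit differentiation, where one must observe that the explicit $\cR_0$-dependence of $b$ and $c$ via their $\cR_0-1$ factors contributes nothing at $I_n=0$, so only $\partial d/\partial\cR_0$ survives in the final relation.
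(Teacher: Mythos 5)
Your proposal is correct and follows essentially the same route as the paper: both apply the sign criterion for $\partial I_n/\partial\cR_0$ at $\left(\cR_0,I_n\right)=(1,0)$ from \cite[section 7.5]{Martcheva} and obtain it by implicit differentiation of the cubic \eqref{eq:I*cubic}, and your expression $-(\mu+\gamma\lambda)\left(\mu+\mu'+\alpha\right)/\,c|_{\cR_0=1}$ simplifies, via $\beta\lambda=(\mu+\gamma\lambda)\left(\mu+\mu'+\alpha\right)$, to exactly the paper's displayed quantity $\beta\lambda^2\left(\mu+\mu'+\alpha\right)/\bigl[\mu\left(\mu+\mu'+\alpha\right)^3-\alpha\beta\lambda^2\rho\bigr]$. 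The only difference is organizational: the paper first substitutes $\gamma=\beta/\bigl[\cR_0\left(\mu+\mu'+\alpha\right)\bigr]-\mu/\lambda$ into the cubic before differentiating, whereas you differentiate first and exploit the vanishing of all $I_n$-carrying terms, which is a cleaner bookkeeping of the same computation.
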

\begin{proof}
From \eqref{eq:BRN} one obtains
$$\gamma=\frac{\beta}{\cR_0\left(\mu+\mu'+\alpha\right)}-\frac{\mu}{\lambda}.$$
Substituting this into \eqref{eq:I*cubic} and differentiating both sides with respect to $\cR_0$, one obtains that
$$\left.\frac{\partial I_n}{\partial\cR_0}\right|_{\left(\cR_0,I_n\right)=(1,0)}=\frac{\beta\lambda^2\left(\mu+\mu'+\alpha\right)}{\mu\left(\mu+\mu'+\alpha\right)^3-\alpha\beta\lambda^2\rho}.$$
A forward (backward, respectively) bifurcation occurs if this quantity is positive (negative, respectively), proving the theorem.
\end{proof}\bigskip

Our final theorem is an application of Dulac criterion \cite[Theorem 6.11]{Robinson} to describe a property of a periodic orbit of our reduced model \eqref{eq:reducedmodel} upon its existence: such an orbit must intersect a specific quadratic curve in $S$ and $I$ whose formula depends on the model's parameters. For a description of the many possible shapes of such a curve, see, e.g., \cite{Kesavan}. For the values of parameters \eqref{eq:parameters} used in section \ref{sec:numerics}, the curve is a hyperbola.\bigskip

\begin{theorem}\label{prop:periodicorbit}
A periodic orbit of the model \eqref{eq:reducedmodel}, if it exists, intersects the curve
\begin{equation}\label{eq:curve}
\tilde{a}I^2+\tilde{b}SI+\tilde{c}S+\tilde{d}I+\tilde{e}=0,
\end{equation}
where
\begin{align*}
\tilde{a}&:=-\beta\rho,\\
\tilde{b}&:=-4\gamma\mu\rho-2\gamma\mu'\rho+2\beta\rho,\\
\tilde{c}&:=-\alpha\gamma-3\gamma\mu-\gamma\mu'+\beta,\\
\tilde{d}&:=\gamma\lambda\rho-3\mu\rho-2\mu'\rho-\beta,\\
\tilde{e}&:=\gamma\lambda-\alpha-2\mu-\mu'.
\end{align*}
\end{theorem}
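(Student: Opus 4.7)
The plan is to apply the Dulac/Bendixson criterion (as anticipated in the paragraph preceding the statement). Writing the right-hand side of \eqref{eq:reducedmodel} as $\mathbf{v}=(f_{1},f_{2})$ and choosing any $C^{1}$ positive function $B$ on the region in which a candidate periodic orbit $\Gamma$ lies, Green's theorem applied to the interior $D$ of $\Gamma$ gives
$$\iint_{D}\nabla\cdot(B\mathbf{v})\,\mathrm{d}A=\oint_{\Gamma}B\mathbf{v}\cdot\mathbf{n}\,\mathrm{d}s=0,$$
because $\mathbf{v}$ is everywhere tangent to $\Gamma$. Hence $\nabla\cdot(B\mathbf{v})$ cannot have constant sign on $\overline{D}$, so its zero set must meet $\overline{D}$. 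The task thus reduces to exhibiting a positive multiplier $B$ whose divergence equals the polynomial appearing in \eqref{eq:curve}.

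The natural candidate, motivated by the desire to clear the two denominators in $\mathbf{v}$, is $B(S,I):=(1+\gamma S)(1+\rho I)$, which is manifestly positive on $\Omega'$. With this choice the products
$$Bf_{1}=(1+\gamma S)(1+\rho I)(\lambda-\mu S)-\beta SI(1+\rho I),$$
$$Bf_{2}=-(\mu+\mu')I(1+\gamma S)(1+\rho I)+\beta SI(1+\rho I)-\alpha I(1+\gamma S)$$
become polynomials in $S$ and $I$. Direct differentiation gives $\partial(Bf_{1})/\partial S=(1+\rho I)[\gamma\lambda-2\gamma\mu S-\mu-\beta I]$ and $\partial(Bf_{2})/\partial I=(1+2\rho I)[(\beta-(\mu+\mu')\gamma)S-(\mu+\mu')]-\alpha(1+\gamma S)$. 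The remaining step is to add these two expressions and group the result by the monomials $I^{2},SI,S,I,1$; I expect the coefficients to reduce term-by-term to $\tilde{a},\tilde{b},\tilde{c},\tilde{d},\tilde{e}$ respectively. This is a routine but sign-sensitive computation, and it represents the bulk of the work.

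Once the identity $\nabla\cdot(B\mathbf{v})=\tilde{a}I^{2}+\tilde{b}SI+\tilde{c}S+\tilde{d}I+\tilde{e}$ is established, the conclusion is immediate from the first paragraph: if $\Gamma$ avoided the curve \eqref{eq:curve} then $\nabla\cdot(B\mathbf{v})$ would have constant sign on a simply connected neighbourhood containing $\Gamma\cup D$, contradicting the vanishing of the area integral. Hence $\Gamma$ must meet the curve.

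The main obstacle, if one were attempting this without the statement already in hand, is to guess the Dulac function; after that the proof is bookkeeping. A secondary subtlety is the precise topological reading of ``intersects'': the Green's-theorem computation strictly forces the zero set of the divergence to meet $\overline{D}$, and the cleanest way to deduce that $\Gamma$ itself meets the curve is the contradiction just sketched, which exploits the connectedness of $\Gamma\cup D$.
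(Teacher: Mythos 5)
Your proposal is correct and follows essentially the same route as the paper: the authors also take the Dulac function $(1+\gamma S)(1+\rho I)$, verify that the divergence of the rescaled vector field equals the left-hand side of \eqref{eq:curve}, and invoke the Dulac criterion to rule out a periodic orbit lying entirely on one side of the curve. Your two partial derivatives are correct, and their sum does collect term-by-term to $\tilde{a},\tilde{b},\tilde{c},\tilde{d},\tilde{e}$, so the remaining bookkeeping goes through as you anticipate.
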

\begin{proof}
For every $(S,I)\in\Omega'$, let
$$g(S,I):=(1+\gamma S)(1+\rho I),$$
and let $f_1(S,I)$ and $f_2(S,I)$ be the right-hand sides of the equations in \eqref{eq:reducedmodel}. Direct computation shows that
$$\frac{\partial (g(S,I)f_1(S,I))}{\partial S}+\frac{\partial (g(S,I)f_2(S,I))}{\partial I}$$
is precisely the left-hand side of \eqref{eq:curve}. By Dulac criterion, this means that there cannot be any periodic orbit contained entirely in
$$\left\{(S,I)\in\Omega':\tilde{a}I^2+\tilde{b}SI+\tilde{c}S+\tilde{d}I+\tilde{e}<0\right\}$$
or entirely in
$$\left\{(S,I)\in\Omega':\tilde{a}I^2+\tilde{b}SI+\tilde{c}S+\tilde{d}I+\tilde{e}>0\right\}.$$
The theorem follows.
\end{proof}\bigskip

\noindent We will illustrate this theorem for the cases considered in section \ref{sec:numerics} in which periodic orbits exist.

\smallskip\subsection{Sensitivity analysis}\label{subsec:sensitivity}

In subsection \ref{subsec:DFE} we have discussed how the value of $\cR_0$ can be suppressed by changing the values of the parameters on which it depends: $\beta$, $\lambda$, $\gamma$, $\mu$, $\mu'$, and $\alpha$. Let us now discuss a quantity which measures the relative impact of each of these parameters on $\cR_0$. The \textit{sensitivity index} of the basic reproduction number $\cR_0$ to a parameter $p\in\left\{\beta,\lambda,\gamma,\mu,\mu',\alpha\right\}$ is given by
$$\SI_p:=\frac{\partial\cR_0}{\partial p}\cdot\frac{p}{\cR_0},$$
i.e., the ratio of the relative change in $\cR_0$ to the relative change in $p$, assuming the required differentiability \cite{ChitnisHymanCushing}. Direct computation gives
\begin{center}
\begin{tabular}{rclcrcl}
$\displaystyle\SI_\beta$&=&$\displaystyle1$, &\phantom{aaaaaa}& $\displaystyle\SI_\mu$&=&$\displaystyle-\frac{\mu (2\mu+\mu'+\alpha+\gamma\lambda)}{\left(\mu+\mu'+\alpha\right)(\mu+\gamma\lambda)}$,\\[0.4cm]
$\displaystyle\SI_\lambda$&=&$\displaystyle\frac{\mu}{\mu+\gamma\lambda}$, && $\displaystyle\SI_{\mu'}$ &=& $\displaystyle-\frac{\mu'}{\mu+\mu'+\alpha}$,\\[0.4cm]
$\displaystyle\SI_\gamma$&=&$\displaystyle-\frac{\gamma\lambda}{\mu+\gamma\lambda}$, && $\displaystyle\SI_\alpha$&=&$\displaystyle-\frac{\alpha}{\mu+\mu'+\alpha}$.
\end{tabular}
\end{center}
Notice that $\SI_\beta$, $\SI_{\mu'}$, and $\SI_\alpha$ are independent of $\gamma$.

For the values of parameters \eqref{eq:parameters} used on our numerical simulations (section \ref{sec:numerics}), the values of these indices in the cases of $\gamma=0.3497$ (where $\cR_0<1$) and $\gamma=0.1$ (where $\cR_0>1$) are presented in Table \ref{tab:sensitivity}. In both cases we can see that the parameters most sensitive to $\cR_0$, i.e., those with the largest sensitivity indices in absolute value, are $\beta$ and $\gamma$. Indeed, a 1\% increase in the parameter $\beta$ ($\gamma$, respectively) results in a $1\%$ increase ($0.997\%$ decrease, respectively) in the basic reproduction number. This confirms that the governments' effort to resolve the pandemic by increasing the citizens' cautiousness level is reasonable (cf.\ subsection \ref{subsec:DFE}).

\begin{table}
\centering\renewcommand{\arraystretch}{1.4}
\begin{tabular}{|l|l|l|}
\hline
\multirow{2}{*}{Sensitivity index} & $\gamma=0.3497$,   & $\gamma=0.1$,\\
                                   & $\cR_0=0.4599<1$ & $\cR_0=1.5970>1$\\
\hhline{|=|=|=|}
$\SI_\beta$ & $1$ & $1$ \\\hline
$\SI_\lambda$ & $0.002851$ & $0.009901$\\\hline
$\SI_\gamma$ & $-0.99715$ & $-0.9901$\\\hline
$\SI_\mu$ & $-0.03511$ & $-0.04216$\\\hline
$\SI_{\mu'}$ & $-0.32258$ & $-0.32258$\\\hline
$\SI_\alpha$ & $-0.64516$ & $-0.64516$\\\hline
\end{tabular}
\caption{\label{tab:sensitivity} Values of the sensitivity indices of the basic reproduction number of our model for $\beta=0.05$, $\lambda=10$, $\mu=0.01$, $\mu'=0.1$, $\alpha=0.2$, and $\gamma$ as in the first row.}
\end{table}

\section{Numerical results}\label{sec:numerics}

In this section, we present the results of our numerical explorations. These are carried out using the following parameter values, which, as we shall see, are chosen to expose the rich dynamical behaviour of the model as $\gamma$ varies over its domain\footnote{Certainly, the parameter values can also be chosen or estimated in order to describe the epidemic situation in a particular region. Indeed, we have used the same model to describe the situation in Jakarta over a specific period, whereby the values of $\lambda$, $\mu$, $\mu'$, and $\rho$ are adopted from \cite{AldilaEtAl,Kemkes}, while those of $\alpha$ and $\beta$ are estimated via discretisation and the so-called L-BFGS-B algorithm \cite{FeiRongWangWang} using the data provided by the Johns Hopkins Coronavirus Resource Center \cite{JHU}; see \cite{YongHoseanaOwen} for details. The same algorithm can also be used to estimate $\gamma$.}:
\begin{equation}\label{eq:parameters}
\beta=0.05,\quad\lambda=10,\quad\mu=0.01,\quad\mu'=0.1,\quad\alpha=0.2,\quad\text{and}\quad\rho=0.1.
\end{equation}
We divide the presentation into three subsections. In the first subsection, we analyse the model by applying our analytic results in section \ref{sec:analytic}; this results in the finding of a \textit{backward transcritical bifurcation} at $\gamma\approx 0.1602903226$, a \textit{Hopf bifurcation} at $\gamma\approx 0.3496375754$, and a \textit{saddle-node bifurcation} at $\gamma\approx 0.3569024925$. In the second subsection, we use the software AUTO to discover bifurcations which are undetectable analytically: a \textit{homoclinic bifurcation} at $\gamma\approx 0.3498971211$ and a \textit{fold bifurcation of a limit cycle} at $\gamma\approx 0.3500585184$. In the final subsection, we construct a scenario whereby the susceptible individuals' cautiousness level $\gamma$ varies over time, displaying the behaviour of $I$ over time and showing that our model exhibits a \textit{hysteresis}.

\subsection{Use of analytic results}\label{subsec:numerics1}

For the values of parameters in \eqref{eq:parameters}, the basic reproduction number \eqref{eq:BRN} of our model \eqref{eq:model} reads 
\begin{equation}\label{eq:parametersR0}
\cR_0=\frac{5000}{31+31000\gamma},
\end{equation}
which decreases monotonically with $\gamma$, and our disease-free equilibrium is $\be_0=(\lambda/\mu,0,0)=(1000,0,0)$. This equilibrium exists for all values of $\gamma$, and is stable if $\gamma>\gamma^{(\TR)}$, semistable if $\gamma=\gamma^{(\TR)}$, and unstable if $\gamma<\gamma^{(\TR)}$, where $\gamma^{(\TR)}=4969/31000$ (Theorem \ref{prop:DFE}). The coefficients of \eqref{eq:I*cubic} are
\begin{equation}\label{eq:parametersai}
\begin{array}{rl}
a&\displaystyle=\frac{121}{1000000}\gamma-\frac{11}{200000}\begin{cases}
<0,&\text{if }\gamma<\gamma^{(a)};\\
=0,&\text{if }\gamma=\gamma^{(a)};\\
>0,&\text{if }\gamma>\gamma^{(a)},
\end{cases}\\
b&\displaystyle=-\frac{209}{50000}\gamma+\frac{2889}{1000000}\begin{cases}
>0,&\text{if }\gamma<\gamma^{(b)};\\
=0,&\text{if }\gamma=\gamma^{(b)};\\
<0,&\text{if }\gamma>\gamma^{(b)},
\end{cases}\\
c&\displaystyle=-\frac{3239}{10000}\gamma+\frac{1051}{12500}\begin{cases}
>0,&\text{if }\gamma<\gamma^{(c)};\\
=0,&\text{if }\gamma=\gamma^{(c)};\\
<0,&\text{if }\gamma>\gamma^{(c)},
\end{cases}\\
d&\displaystyle=-\frac{31}{10}\gamma+\frac{4969}{10000}\begin{cases}
>0,&\text{if }\gamma<\gamma^{(\TR)};\\
=0,&\text{if }\gamma=\gamma^{(\TR)};\\
<0,&\text{if }\gamma>\gamma^{(\TR)},
\end{cases}
\end{array}
\end{equation}
where
$$\gamma^{(a)}=\frac{5}{11},\qquad\gamma^{(b)}=\frac{2889}{4180},\qquad\text{and}\qquad \gamma^{(c)}=\frac{4204}{16195}$$
satisfy
$$\gamma^{(\TR)}<\gamma^{(c)}<\gamma^{(a)}<\gamma^{(b)}.$$
By Descartes' rule of signs (cf.\ subsection \ref{subsec:DEE}), this implies that
$$\cN\in\begin{cases}
\{1\},&\text{if }0\leqslant \gamma\leqslant \gamma^{(\TR)}\text{ or }\gamma^{(a)}\leqslant\gamma\leqslant 1;\\
\{0,2\},&\text{if }\gamma^{(\TR)}<\gamma<\gamma^{(a)}.
\end{cases}$$

To find the specific value of $\cN$ in each possible case, we perform explicit computation: substituting \eqref{eq:parametersai} into \eqref{eq:I*cubic} and solving for $\gamma$ give
\begin{equation}\label{eq:parametersgamma}
\gamma=\frac{\left(I_n+10\right)\left(55{I_n}^2-3439I_n-49690\right)}{\left(11I_n+310\right)\left(11{I_n}^2-690I_n-10000\right)}.
\end{equation}
Notice in particular the singularity
$$I^{(s)}=\frac{345}{11}+\frac{5}{11}\sqrt{9161},$$
which is the larger root of the quadratic polynomial $11{I_n}^2-690I_n-10000$.

Next, substituting \eqref{eq:parameters} into \eqref{eq:Snendemic} gives
\begin{equation}\label{eq:parametersSn}
S_n=-\frac{11{I_n}^2-690I_n-10000}{I_n+10}\begin{cases}
>0,&\text{if }I_n<I^{(s)};\\
\leqslant 0,&\text{if }I_n\geqslant I^{(s)},
\end{cases}
\end{equation}
meaning that the endemic equilibrium exists if and only if $I_n<I^{(s)}$. Consequently, we restrict our attention the region $\left[0,\gamma^{(\SN)}\right]\times\left[0,I^{(s)}\right)$, where $\left(I^{(\SN)},\gamma^{(\SN)}\right)\approx(65.1955050073,0.3569024925)$ is the maximum point of the right-hand side of \eqref{eq:parametersgamma} for $0\leqslant I_n<I^{(s)}$. The bifurcation diagram consisting of points $\left(\gamma,I_n\right)$ in this region satisfying \eqref{eq:parametersgamma} is plotted in Figure \ref{fig:bifurcationdiagram} (left).

\begin{figure}\centering
\begin{tikzpicture}
\begin{axis}[
	xmin=0,
	xmax=0.43,
	ymin=0,
	ymax=83,
    xtick={0.1602903226,.3496365152,.3569024928},
    xticklabels={$\gamma^{(\TR)}$,$\gamma^{(\HB)}\,\,\,\,\,\,\,\,\,\,\,\,$,$\,\,\,\,\,\,\,\,\,\,\,\,\gamma^{(\SN)}$},    
    ytick={65.19550517,70.72118905,74.86959682},
    yticklabels={$I^{(\SN)}$,$I^{(\HB)}$,$I^{(s)}$},
	samples=100,
	xlabel=$\gamma$,
	ylabel=$I_n$,
	width=0.475\textwidth,
    height=0.475\textwidth,
    axis lines=middle,
    x axis line style=->,
    y axis line style=->,
    clip=false,
    tick label style={font=\scriptsize},
]
\addplot [domain = 0:70.72118905, samples = 300, very thick, red, dashed]
      ({(.4545454545*(x-74.63262880))*(x+10.00000002)*(x+12.10535604)/((x-74.86959682)*(x+12.14232409)*(x+28.18181818))}, {x});
\addplot [domain = 70.72118905:74.63262879, samples = 300, very thick, red]
      ({(.4545454545*(x-74.63262880))*(x+10.00000002)*(x+12.10535604)/((x-74.86959682)*(x+12.14232409)*(x+28.18181818))}, {x});
\addplot [domain = 0:0.1602903226, samples = 300, very thick, blue, dashed]
      ({x}, {0});
\addplot [domain = 0.1602903226:0.42, samples = 300, very thick, blue]
      ({x}, {0});
\draw[dotted] (axis cs:.1602903226,0) -- (axis cs:.1602903226,79);
\draw[dotted] (axis cs:.3496365152,0) -- (axis cs:.3496365152,79);
\draw[dotted] (axis cs:.3569024928,0) -- (axis cs:.3569024928,79);
\node[blue,above] at (axis cs:0.2975,0) {$I_0$};
\node[red] at (axis cs:0.2975,34) {$I_2$};
\node[red] at (axis cs:0.3082443924,77) {$I_1$};
\draw[dotted] (axis cs:0,74.86959682)--(axis cs:0.42,74.86959682);
\draw[dotted] (axis cs:0,70.72118905)--(axis cs:0.42,70.72118905);
\draw[dotted] (axis cs:0,65.19550517)--(axis cs:0.42,65.19550517);
\end{axis}
\end{tikzpicture}\qquad\begin{tikzpicture}
\begin{axis}[
	xmin=0.1611291934,
	xmax=1.19,
	ymin=0,
	ymax=83,
    xtick={0.4506543714,0.4599929432,1},
    xticklabels={$\cR_0^{(\SN)}\,\,\,\,\,\,\,\,\,\,\,\,\,\,\,$,$\,\,\,\,\,\,\,\,\,\,\,\,\,\,\,\cR_0^{(\HB)}$,$1$},    
    ytick={65.19550517,70.72118905,74.86959682},
    yticklabels={$I^{(\SN)}$,$I^{(\HB)}$,$I^{(s)}$},
	samples=100,
	xlabel=$\cR_0$,
	ylabel=$I_n$,
	width=0.475\textwidth,
    height=0.475\textwidth,
    axis lines=middle,
    x axis line style=->,
    y axis line style=->,
    clip=false,
    tick label style={font=\scriptsize},
]
\addplot [domain = 0:70.72118905, samples = 300, very thick, red, dashed]
      ({(.3540597781*(x-74.86959684))*(x+12.14232410)*(x+28.18181816)/((x-74.63326071)*(x+10.04073870)*(x+12.10472242))}, {x});
\addplot [domain = 70.72118905:74.49199531, samples = 300, very thick, red, ->]
      ({(.3540597781*(x-74.86959684))*(x+12.14232410)*(x+28.18181816)/((x-74.63326071)*(x+10.04073870)*(x+12.10472242))}, {x});
\addplot [domain = 1:1.15, samples = 300, very thick, blue, dashed]
      ({x}, {0});
\addplot [domain = 0.1611291934:1, samples = 300, very thick, blue]
      ({x}, {0});
\draw[dotted] (axis cs:0.1611291934,74.86959682)--(axis cs:1.15,74.86959682);
\draw[dotted] (axis cs:0.1611291934,70.72118905)--(axis cs:1.15,70.72118905);
\draw[dotted] (axis cs:0.1611291934,65.19550517)--(axis cs:1.15,65.19550517);
\draw[dotted] (axis cs:0.4506543714,0)--(axis cs:0.4506543714,83);
\draw[dotted] (axis cs:0.4599929432,0)--(axis cs:0.4599929432,83);
\node[blue,above] at (axis cs:0.55,0) {$I_0$};
\node[red] at (axis cs:0.55,34) {$I_2$};
\node[red] at (axis cs:0.58,77) {$I_1$};
\end{axis}
\end{tikzpicture}
\caption{\label{fig:bifurcationdiagram} On the left panel, plot of points $\left(\gamma,I_n\right)\in\left[0,\gamma^{(\SN)}\right]\times\left[0,I^{(s)}\right)$ satisfying \eqref{eq:parametersgamma} (red) and the line $I_0=0$ representing the disease-free equilibrium (blue). On the right panel, plot of points $\left(\cR_0,I_n\right)$ satisfying \eqref{eq:parametersR02} (red) and the line $I_0=0$ representing the disease-free equilibrium (blue). Solid and dashed lines indicate stability and instability, respectively.}
\end{figure}
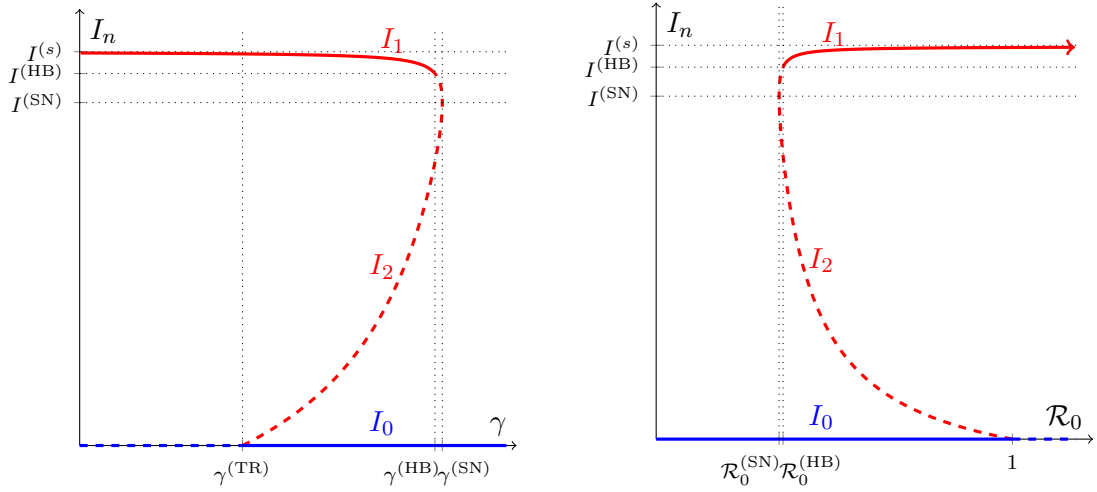

The stabilities of the endemic equilibria displayed in Figure \ref{fig:bifurcationdiagram} (left) are obtained as follows. Substituting \eqref{eq:parametersgamma} and \eqref{eq:parametersSn} into \eqref{eq:PnQn}, one obtains $\cP_n$, $\cQ_n$, and ${\cP_n}^2-4\cQ_n$ as functions of $I_n$ and reveals that, for $0<I_n<I^{(s)}$,
$$\cQ_n\begin{cases}
<0,&\text{if }0<I_n<I^{(\SN)};\\
=0,&\text{if }I_n=I^{(\SN)};\\
>0,&\text{if }I^{(\SN)}<I_n<I^{(s)}
\end{cases}\qquad\text{and}\qquad {\cP_n}^2-4\cQ_n\begin{cases}
>0,&\text{if }0<I_n<I^{(r)};\\
=0,&\text{if }I_n=I^{(r)};\\
<0,&\text{if }I^{(r)}<I_n<I^{(q)};\\
=0,&\text{if }I_n=I^{(q)};\\
>0,&\text{if }I^{(q)}<I_n<I^{(s)};\\
\end{cases}$$
and, for $I^{(\SN)}\leqslant I_n<I^{(s)}$,
$$\cP_n\begin{cases}
<0,&\text{if }I^{(\SN)}\leqslant I_n<I^{(\HB)};\\
=0,&\text{if }I_n=I^{(\HB)};\\
>0,&\text{if }I^{(\HB)}<I_n<I^{(s)},
\end{cases}$$
where $I^{(r)}\approx 65.6956172723$, $I^{(q)}\approx 74.2040760587$, and $I^{(\HB)}\approx 70.7209428218$ satisfy
$$0<I^{(\SN)}<I^{(r)}<I^{(\HB)}<I^{(q)}<I^{(s)}.$$
Letting the pair $\left(I^{(\HB)},\gamma^{(\HB)}\right)$ satisfy \eqref{eq:parametersgamma}, we have $\gamma^{(\HB)}\approx 0.3496375754$. These suffice to deduce the stability of our endemic equilibria as shown in Figure \ref{fig:bifurcationdiagram} (left), using Theorem \ref{prop:DEE}:
\begin{itemize}
\item The equilibrium $\be_1$, existing for $0\leqslant\gamma\leqslant\gamma^{(\SN)}$, is locally asymptotically stable for $0\leqslant\gamma<\gamma^{(\HB)}$, and unstable\footnote{The equilibrium $\be_1'$ of \eqref{eq:reducedmodel} is an unstable focus for $\gamma^{(\HB)}<\gamma<\gamma^{(r)}$, and an unstable node for $\gamma^{(r)}<\gamma<\gamma^{(\SN)}$, where $\gamma^{(r)}\approx 0.3568741376$ is such that the pair $\left(I^{(r)},\gamma^{(r)}\right)$ satisfies \eqref{eq:parametersgamma}.} for $\gamma^{(\HB)}<\gamma\leqslant\gamma^{(\SN)}$ (coinciding with $\be_2$ for $\gamma=\gamma^{(\SN)}$).
\item The equilibrium $\be_2$, existing for $\gamma^{(\TR)}\leqslant\gamma\leqslant\gamma^{(\SN)}$, is semistable (coinciding with $\be_0$) for $\gamma=\gamma^{(\TR)}$, and unstable\footnote{The equilibrium $\be_2'$ of \eqref{eq:reducedmodel} is a saddle point for $\gamma^{(\TR)}<\gamma<\gamma^{(\SN)}$.} for $\gamma^{(\TR)}<\gamma\leqslant\gamma^{(\SN)}$ (coinciding with $\be_1$ for $\gamma=\gamma^{(\SN)}$).
\end{itemize}

Therefore, at $\gamma=\gamma^{(\SN)}$, a \textit{saddle-node bifurcation} \cite[section 8.1]{Strogatz} occurs: the two unstable endemic equilibria $\be_1$ and $\be_2$ approach each other, coalesce, and disappear. At $\gamma=\gamma^{(\TR)}$, our model undergoes a \textit{transcritical bifurcation} \cite[section 8.1]{Strogatz}: the disease-free equilibrium $\be_0$ and an appearing endemic equilibrium $\be_2$ exchange stability. Since the unstable endemic equilibrium $\be_2$ exists for $\left|\gamma-\gamma^{(\TR)}\right|$ small and $\gamma>\gamma^{(\TR)}$ (Figure \ref{fig:bifurcationdiagram} (left)), we can see that the transcritical bifurcation is \textit{backward}. This is agrees with the fact that
$$\frac{\mu\left(\mu+\mu'+\alpha\right)^3}{\alpha\lambda^2\rho}=\frac{29791}{200000000}<\beta,$$
as prescribed by Theorem \ref{prop:forwardbackwardbifurcation}. For a diagram on the $\cR_0I_n$-plane, we first let
$$\cR_0^{(\SN)}=\frac{5000}{31+31000\gamma^{(\SN)}}\approx 0.4506543710$$
and
$$\cR_0^{(\HB)}=\frac{5000}{31+31000\gamma^{(\HB)}}\approx 0.4599915523.$$
From \eqref{eq:parametersR0} we obtain
$$\gamma=\frac{5000-31\cR_0}{31000\cR_0}.$$
Substituting this into \eqref{eq:parametersai}, and then \eqref{eq:parametersai} into \eqref{eq:I*cubic}, gives
\begin{equation}\label{eq:parametersR02}
\cR_0=\frac{5000\left(121{I_n}^3-4180{I_n}^2-323900I_n-3100000\right)}{31\left(55121{I_n}^3-2893180{I_n}^2-84403900I_n-500000000\right)}.
\end{equation}
The bifurcation diagram consisting of points $\left(\cR_0,I_n\right)$ satisfying this equation is plotted in Figure \ref{fig:bifurcationdiagram} (right), showing that there are two endemic equilibria $\be_1$, $\be_2$ for $\cR_0^{(\SN)}<\cR_0<1$ and one endemic equilibrium $\be_1$ for $\cR_0\geqslant 1$.

Next, for $n=1$, substituting \eqref{eq:parametersgamma} and \eqref{eq:parametersSn} into the first equation in \eqref{eq:PnQn}, one obtains an expression for $\cP_1$ in terms of $I_1$. Differentiating this with respect to $I_1$ and evaluating the result at $I_1=I^{(\HB)}$ gives
$$\left.\frac{\partial\cP_1}{\partial I_1}\right|_{I_1=I^{(\HB)}}\approx 0.0059945065.$$
On the other hand, differentiating the right-hand side of \eqref{eq:parametersgamma} with respect to $I_1$ and evaluating the result at $I_1=I^{(\HB)}$ gives
$$\left.\frac{\partial\gamma}{\partial I_1}\right|_{I_1=I^{(\HB)}}\approx -0.0043073268.$$
The derivative of the common real part of the eigenvalues of the equilibrium $\be_1'$ of \eqref{eq:reducedmodel}, namely $-\cP_1/2$, with respect to $\gamma$, satisfies
$$\frac{\partial \left(-\cP_1/2\right)}{\partial \gamma}=\frac{\partial \left(-\cP_1/2\right)}{\partial I_1}\cdot\frac{\partial I_1}{\partial \gamma}=-\frac{1}{2}\cdot\frac{\partial \cP_1}{\partial I_1}\cdot\frac{1}{\partial \gamma/\partial I_1};$$
its value at $\gamma=\gamma^{(\HB)}$ is thus positive. Since $\cP_1=0$ and $\cQ_1\neq 0$ at $\gamma=\gamma^{(\HB)}$, this implies that our model undergoes a \textit{supercritical Hopf bifurcation} \cite[Theorem 6.6]{Robinson} at $\gamma=\gamma^{(\HB)}$: the endemic equilibrium $\be_1$ changes from a stable equilibrium (for $\left|\gamma-\gamma^{(\HB)}\right|$ small and $\gamma<\gamma^{(\HB)}$) into an unstable focus surrounded by a stable limit cycle (for $\left|\gamma-\gamma^{(\HB)}\right|$ small and $\gamma>\gamma^{(\HB)}$).

However, neither the cycle's equation, nor its behaviour as $\gamma$ is increased, is easy to obtain analytically. For this reason, we resort to a more sophisticated computer assistance. In the next subsection, we use AUTO, a numerical continuation and bifurcation software for ordinary differential equations, to carry out further explorations in this direction, revealing two bifurcations which have not been found analytically.

\subsection{Use of AUTO}\label{subsec:numerics2}

For the parameter values in \eqref{eq:parameters} and $\gamma = 0.1$, we first compute the endemic equilibrium $\be_1$. Using AUTO, we then follow this equilibrium as we vary $\gamma$. Three bifurcations are found: a transcritical bifurcation at $\gamma = \gamma^{(\TR)}$, a Hopf bifurcation at $\gamma =  \gamma^{(\HB)}$, and a saddle node bifurcation at $\gamma = \gamma^{(\SN)}$, as expected. From $\gamma =  \gamma^{(\HB)}$, we follow a periodic solution and obtain a plot of the period of the model's periodic orbits versus $\gamma$ (Figure \ref{fig:period}). The plot reveals the existence of two new critical values, $\gamma^{(\HM)}\approx 0.3498971211$ and $\gamma^{(\FLC)}\approx 0.3500585184$, where our model undergoes a \textit{homoclinic bifurcation} and a \textit{fold bifurcation of a limit cycle}, respectively. A complete summary of the number and stability of our model's equilibria and periodic orbits for all possible values of $\gamma$ is presented in Table \ref{tab:summary}. 

\begin{figure}\centering
\includegraphics[width=0.475\textwidth]{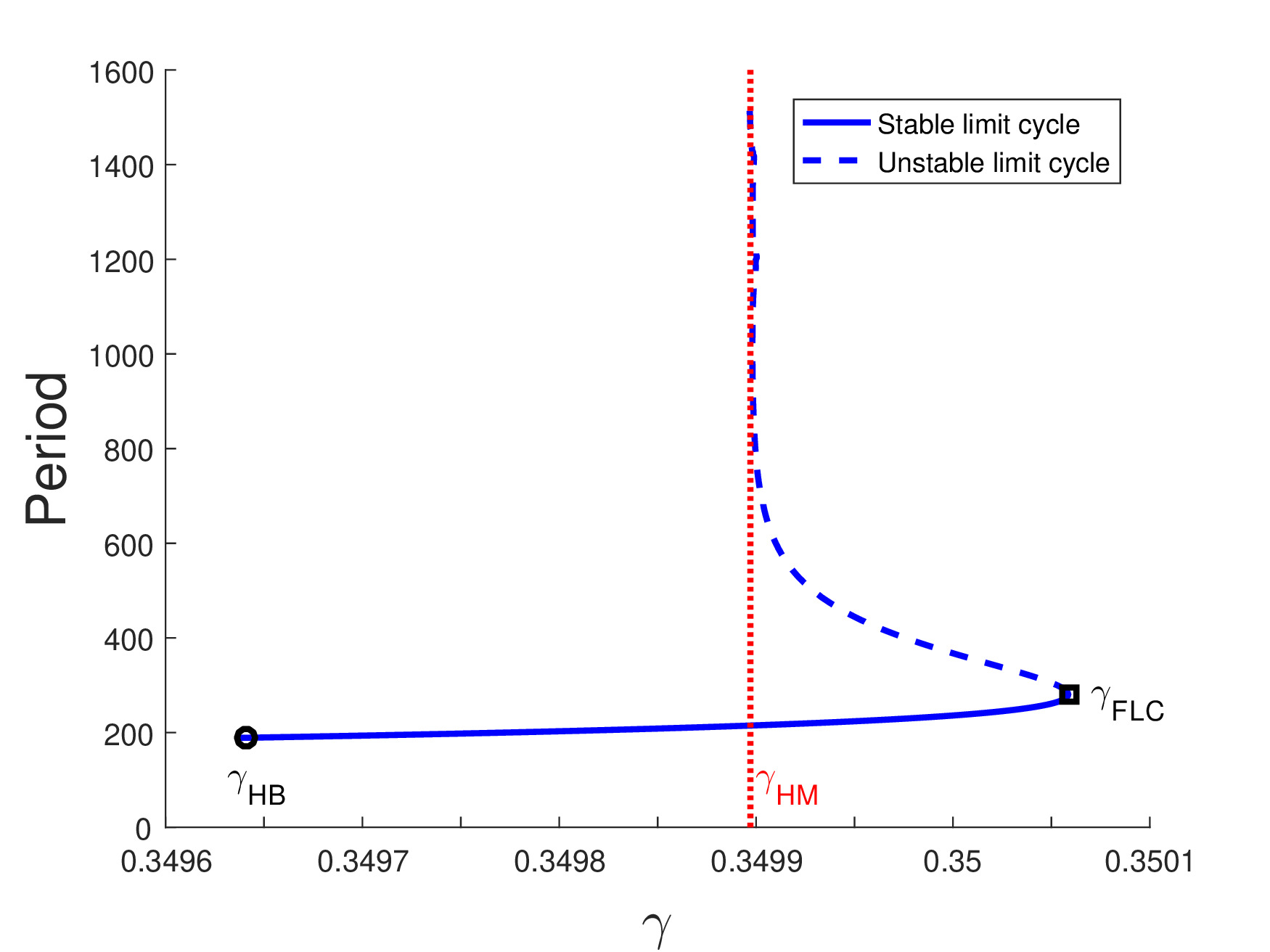}
\caption{\label{fig:period} The periodic orbits' periods against the parameter $\gamma$.}
\end{figure}

\begin{table}
\centering\renewcommand{\arraystretch}{1.4}
\scalebox{0.775}{\begin{tabular}{|c|c|c|c|c|}\hline
	\multirow{2}{*}{Case} & \multirow{2}{*}{Range of $\gamma$} & \multicolumn{3}{c|}{Number and stability of}\\\cline{3-5}
	&& Disease-free equilibrium & Endemic equilibria & Periodic orbits\\
	 \hhline{|=|=|=|=|=|}
	I & $0 \leqslant \gamma < \gamma^{(\TR)}$ & 1 unstable ($\be_0$) & 1 stable ($\be_1$) & 0\\\hline
	II & $\gamma = \gamma^{(\TR)}$ & 1  semistable ($\be_0$) & 1 stable ($\be_1$), 1 semistable ($\be_2$) & 0\\\hline
	III & $\gamma^{(\TR)} < \gamma \leqslant \gamma^{(\HB)}$ & \multirow{8}{*}{1 stable ($\be_0$)} & 1 stable ($\be_1$), 1 unstable ($\be_2$) & 0 \\\cline{1-2}\cline{4-5}
	IV & $\gamma^{(\HB)} < \gamma < \gamma^{(\HM)}$ &   & \multirow{5}{*}{\begin{minipage}{3.8cm}\centering 1 unstable ($\be_1$), 1 unstable ($\be_2$)\end{minipage}} & 1 stable\\\cline{1-2}\cline{5-5}
	V & $ \gamma = \gamma^{(\HM)}$ &    &     & 1 stable, 1 homoclinic orbit\\\cline{1-2}\cline{5-5}
	VI & $\gamma^{(\HM)} < \gamma < \gamma^{(\FLC)}$ &    &     & 1 stable, 1 unstable\\\cline{1-2}\cline{5-5}
	VII & $\gamma= \gamma^{(\FLC)}$ &    &     & 1 semistable\\\cline{1-2}\cline{5-5}
	VIII & $\gamma^{(\FLC)} < \gamma < \gamma^{(\SN)}$ &    &      & 0 \\\cline{1-2}\cline{4-5}
	IX & $ \gamma = \gamma^{(\SN)}$ &    & 1 unstable ($\be_1=\be_2$) & 0\\\cline{1-2}\cline{4-5}
	X & $ \gamma^{(\SN)}<\gamma \leqslant 1$ &    & 0 & 0\\\hline
\end{tabular}}
\caption{\label{tab:summary} Number and stability of disease-free equilibrium, endemic equilibria, and limit cycles of our model for all possible values of $\gamma$.}
\end{table}

In each of the ten cases considered in Table \ref{tab:summary}, we have generated a phase portrait of our model \eqref{eq:model}, simulating and plotting in the $SIR$-space orbits corresponding to a number of initial conditions. The results are presented in Figure \ref{fig:simulations}, which we now describe. We begin with small values of $\gamma$. In cases I and II, $0\leqslant\gamma<\gamma^{(\TR)}$ and $\gamma = \gamma^{(\TR)}$, respectively, we can see that orbits approach the only stable equilibrium: the endemic equilibrium $\be_1$ (panels (a) and (b)). For $\gamma>\gamma^{(\TR)}$, the disease-free equilibrium $\be_0$ is also stable. In case III, therefore, orbits approach either $\be_1$ or $\be_0$ (panel (c)).

In cases IV to VIII, $\gamma^{(\HB)}<\gamma<\gamma^{(\SN)}$, both endemic equilibria are unstable. In case IV, orbits approach either $\be_0$ or the stable limit cycle (panel (d)); to add clarity we also provide a magnification of the plot in panel (d) near this limit cycle (panel (e)). In case V, $\gamma = \gamma^{(\HM)}$, a homoclinic orbit is present around the stable limit cycle as a separatrix: orbits corresponding to initial conditions lying inside (outside, respectively) approach the limit cycle ($\be_0$, respectively) (panels (f) and (g)). In case VI, $\gamma^{(\HM)} < \gamma < \gamma^{(\FLC)}$, the orbital behaviour is the same, except that the separatrix is now an unstable limit cycle (panels (h) and (i)). In case VII, $\gamma= \gamma^{(\FLC)}$, the two limit cycles coalesce and become a single semistable limit cycle which is approached by orbits corresponding to initial conditions inside it, while orbits corresponding to initial conditions outside it approaches $\be_0$ (panels (j) and (k)). In case VIII, $\gamma^{(\FLC)} < \gamma < \gamma^{(\SN)}$, no limit cycle exists; orbits approach $\be_0$ (panel (l)). 

In case IX, the two endemic equilibria coalesce and become a single unstable endemic equilibrium, and orbits approach the disease-free equilibrium $\be_0$ (panel (m)). In case X, we have the same orbital behaviour, but with no existing endemic equilibria (panel (n)).

In all cases whereby periodic orbits exist (IV to VII), in the magnified plots (panels (e), (g), (i), (k)) we plot in light blue the projections of the periodic orbits on the $SI$-plane, confirming that these orbits indeed intersect the quadratic curves specified in Theorem \ref{prop:periodicorbit} which are plotted in magenta.

\begin{figure}\centering
\begin{tabular}{ccc}
		\includegraphics[width=0.3\textwidth]{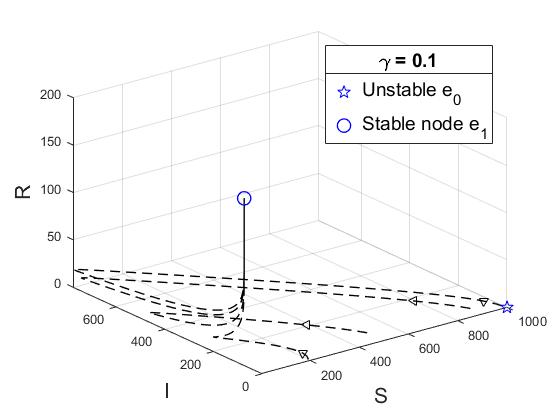}&
		\includegraphics[width=0.3\textwidth]{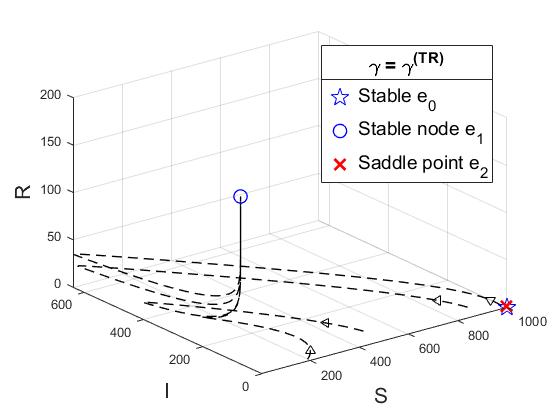}&
		\includegraphics[width=0.3\textwidth]{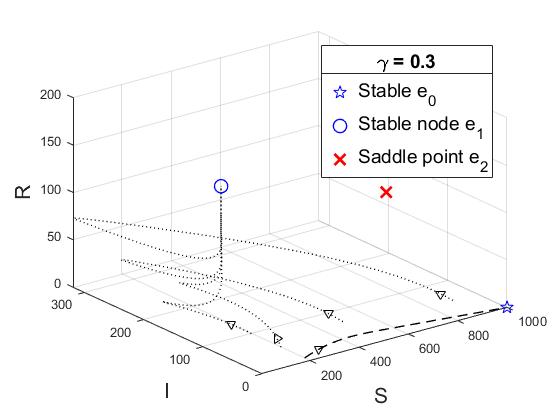}\\
		{\scriptsize (a) Case I: $\gamma = 0.1$} & {\scriptsize (b) Case II: $\gamma = \gamma^{(\TR)}$} & {\scriptsize (c) Case III: $\gamma = 0.3$}\\
		
		\includegraphics[width=0.3\textwidth]{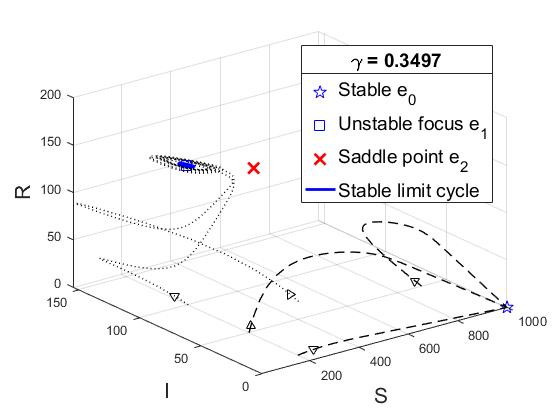}&
		\includegraphics[width=0.3\textwidth]{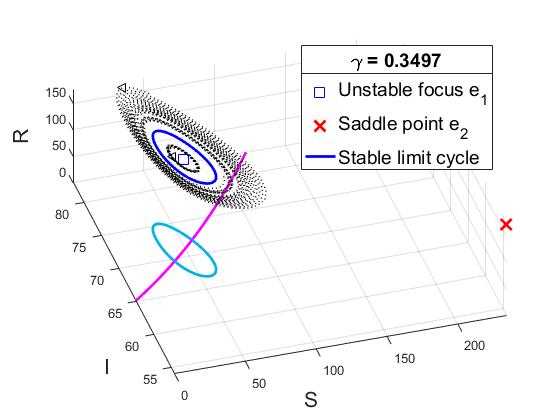}&
		\includegraphics[width=0.3\textwidth]{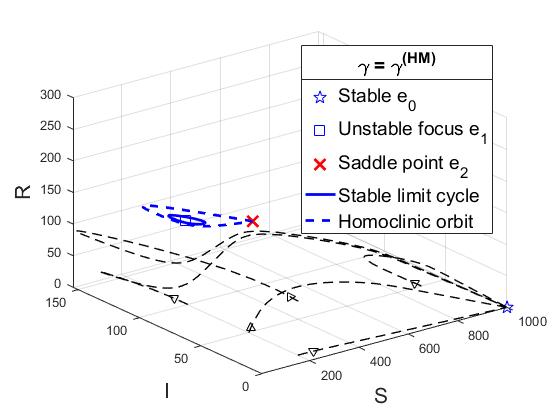}\\
		{\scriptsize (d) Case IV: $\gamma = 0.3497$} & {\scriptsize (e) Case IV: $\gamma = 0.3497$ (magnified)} & {\scriptsize (f) Case V: $\gamma = \gamma^{(\HM)}$}\\

		\includegraphics[width=0.3\textwidth]{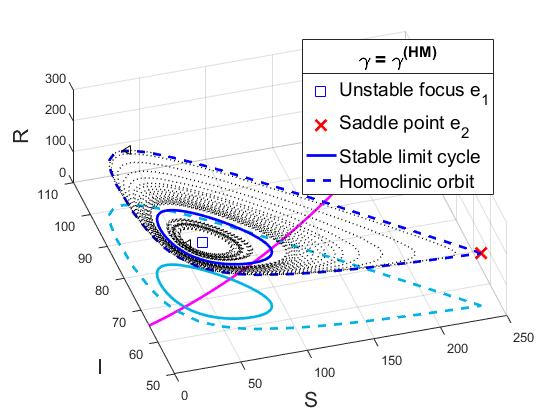}&
		\includegraphics[width=0.3\textwidth]{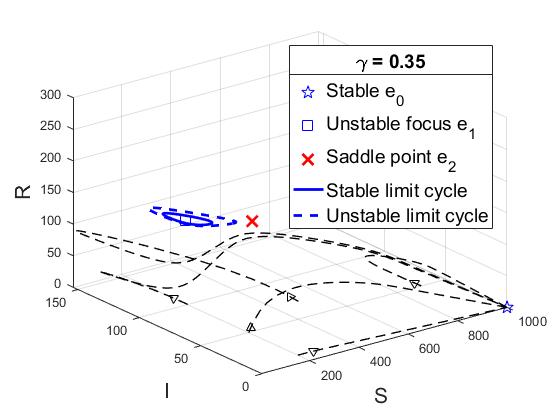}&
		\includegraphics[width=0.3\textwidth]{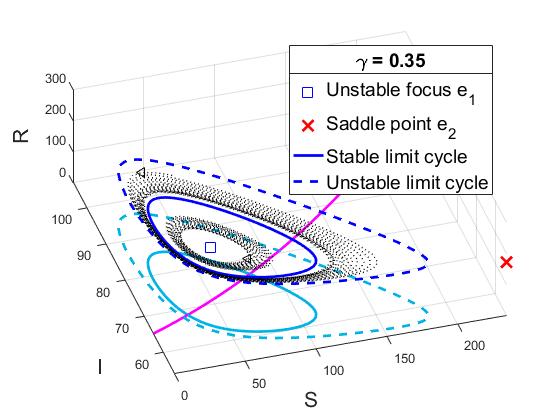}\\	
		{\scriptsize (g) Case V: $\gamma = \gamma^{(\HM)}$ (magnified)} & {\scriptsize (h) Case VI: $\gamma = 0.35$} & {\scriptsize (i) Case VI: $\gamma = 0.35$ (magnified)}\\

		\includegraphics[width=0.3\textwidth]{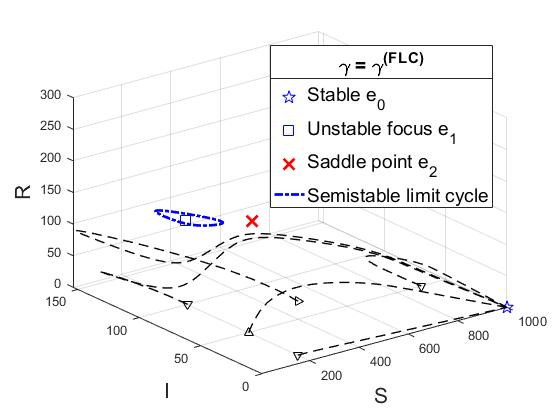}&
		\includegraphics[width=0.3\textwidth]{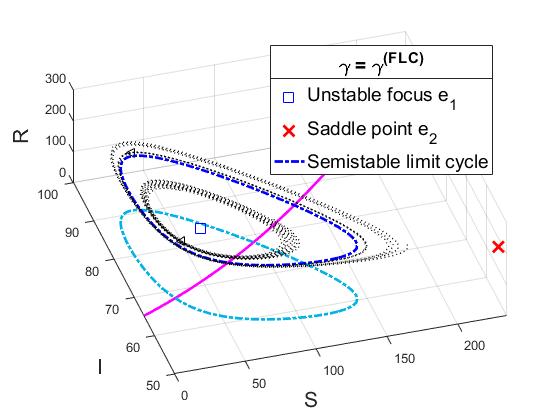}&
		\includegraphics[width=0.3\textwidth]{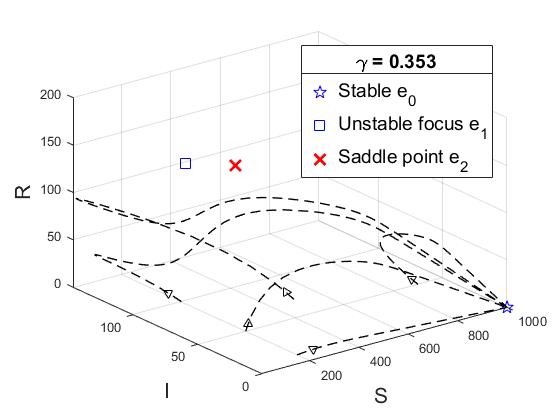}\\	
		{\scriptsize (j) Case VII: $\gamma = \gamma^{(\FLC)}$} & {\scriptsize (k) Case VII: $\gamma = \gamma^{(\FLC)}$ (magnified)} & {\scriptsize (l) Case VIII: $\gamma = 0.353$}\\

		\includegraphics[width=0.3\textwidth]{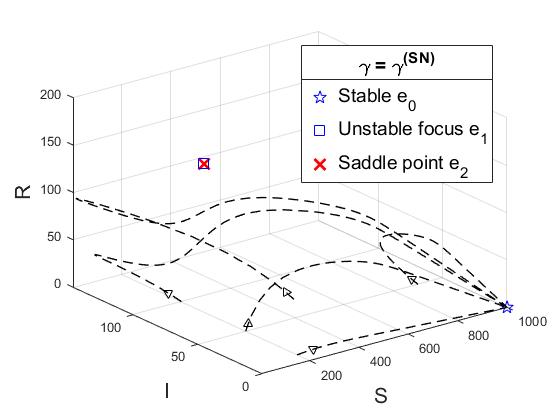}&
		\includegraphics[width=0.3\textwidth]{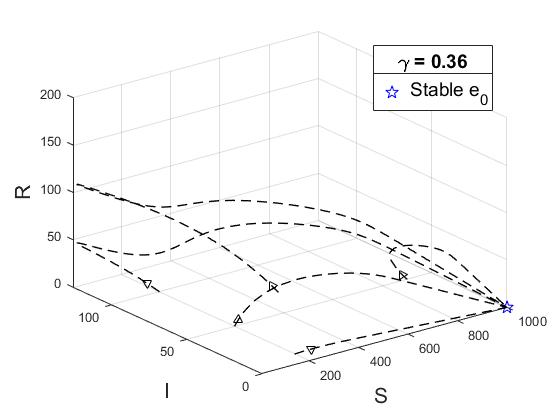}\\	
		{\scriptsize (m) Case IX: $\gamma = \gamma^{(\SN)}$} & {\scriptsize (n) Case X: $\gamma = 0.36$}\\

\end{tabular}		
		\caption{\label{fig:simulations}Phase portraits of the model \eqref{eq:model} in various cases considered in Table \ref{tab:summary}.}
\end{figure}

\subsection{Varying cautiousness level}\label{subsec:numerics3}

To conclude our numerical analysis, let us simulate a scenario whereby, in the equilibrium state, the number $I(t)$ of infected individuals changes over time $t$ as a result of the susceptible individuals' cautiousness level $\gamma(t)$ changing over time $t$. To this end, we first need to choose a specific function $\gamma$ of $t$. We assume that this function is piecewise-constant for simplicity, and construct its branches to accomplish three aims:
\begin{enumerate}
\item[(i)] to expose the hysteresis exhibited by our model,
\item[(ii)] to include in the scenario most of the qualitatively different orbital behaviours of our model as displayed in Figure \ref{fig:simulations}, and
\item[(iii)] to represent, to a certain degree, some actual key events occurring between December 2019 and July 2021 that have affected the spread of COVID-19 in Indonesia.
\end{enumerate}
The construction is described in Table \ref{tab:events}. A plot of $\gamma(t)$ versus $t$ is shown in Figure \ref{fig:scenario} (top).

\begin{table}\centering
\scalebox{0.85}{\begin{tabular}{|l|l|l|l|l|}
\hline
Time interval & Range of $t$ & Events & Value of $\gamma(t)$ & Case\\
\hhline{|=|=|=|=|=|}
Dec 2019 -- Jan 2020 & $0\leqslant t<200$ & \begin{minipage}{5cm}\smallskip Panic began as the world's first COVID-19 case was recorded in China \cite{Saxena,WHOI26Mar2020}. \end{minipage}& 0.3 & III\\\hline
Jan 2020 -- Mar 2020 & $200\leqslant t<600$ & \begin{minipage}{5cm}\smallskip Government urged citizens not to panic \cite{Desk} and provided incentives for foreign visitors, boosting tourism \cite{Gorbiano}.\end{minipage} & 0.1 & I\\\hline
Mar 2020 -- Apr 2020 & $600\leqslant t<800$ & \begin{minipage}{5cm}\smallskip Indonesia's first COVID-19 case was recorded \cite{WHOI26Mar2020}, the large-scale social restrictions (PSBB) was announced \cite{WHOI09Apr2020}, and the disease spread across all 34 provinces \cite{WHOI16Apr2020}.\end{minipage} & 0.33 & \multirow{9}{*}{\begin{minipage}{0.7cm}\bigskip\bigskip\bigskip\bigskip\bigskip\bigskip\bigskip\bigskip\bigskip\bigskip\bigskip\bigskip\bigskip\bigskip\bigskip\bigskip\bigskip\bigskip III\end{minipage}}\\\cline{1-4}
Apr 2020 -- Jun 2020 & $800\leqslant t<1200$ & \begin{minipage}{5cm}\smallskip Government banned \textit{mudik} (the Eid al-Fitr homecoming of migrant workers), albeit not fully obeyed \cite{WHOI16Apr2020}.\end{minipage} & 0.32 &\\\cline{1-4}
Jun 2020 -- Sep 2020 & $1200\leqslant t<1800$ & \begin{minipage}{5cm}\smallskip Government of Jakarta announced a gradual transition from PSBB to `new normal', restimulating citizens' mobility \cite{WHOI10Jun2020}.\end{minipage} & 0.31 &\\\cline{1-4}
Sep 2020 -- Oct 2020 & $1800\leqslant t<2000$ & \begin{minipage}{5cm}\smallskip Government of Jakarta reimposed PSBB \cite{WHOI16Sep2020}.\end{minipage} & 0.315 &\\\cline{1-4}
Oct 2020 -- Dec 2020 & $2000\leqslant t<2400$ & \begin{minipage}{5cm}\smallskip Government of Jakarta relaxed PSBB, introducing `transitional PSBB' \cite{WHOI14Oct2020}. \end{minipage} & 0.305 &\\\cline{1-4}
Dec 2020 -- Feb 2021 & $2400\leqslant t<2800$ & \begin{minipage}{5cm}\smallskip A new highest weekly incidence was recorded following Christmas holidays \cite{WHOI20Jan2021}. \end{minipage} & 0.32 &\\\cline{1-4}
Feb 2021 -- Apr 2021 & $2800\leqslant t<3200$ & \begin{minipage}{5cm}\smallskip Government announced micro-level restrictions on community activities (micro-level PPKM) \cite{WHOI10Feb2021}. \end{minipage} & 0.31 &\\\cline{1-4}
Apr 2021 -- May 2021 & $3200\leqslant t<3400$ & \begin{minipage}{5cm}\smallskip Citizens ignored the government's second \textit{mudik} ban \cite{SyakriahSuhendraThomas}. \end{minipage} & 0.305 &\\\cline{1-4}
May 2021 -- Jun 2021 & $3400\leqslant t<3600$ & \begin{minipage}{5cm}\smallskip A surge in the number of COVID-19 cases and clusters of COVID-19 infection were recorded following Eid al-Fitr holidays \cite{WHOI02Jun2021}. \end{minipage} & 0.34 & \\\hline
Jun 2021 -- Aug 2021 & $3600\leqslant t<4000$ & \begin{minipage}{5cm}\smallskip Government announced emergency restrictions on community activities (emergency PPKM) \cite{WHOI07Jul2021}. \end{minipage} & 0.3497 & IV\\\hline
Aug 2021 -- Sep 2021 & $4000\leqslant t<4200$ & \begin{minipage}{5cm}\smallskip Government rewarned the public to strictly adhere to health protocols, as the Delta variant of the virus was reported in 24 of 34 provinces \cite{WHOI04Aug2021}. \end{minipage} & 0.35 & VI\\\hline
\end{tabular}}	
\caption{\label{tab:events}Constructing a piecewise-constant function $\gamma$ of $t$ for a simulation, in view of some events affecting the spread of COVID-19 in Indonesia between December 2019 and July 2021.}
\end{table}

\begin{figure}\centering
\includegraphics[width=\textwidth]{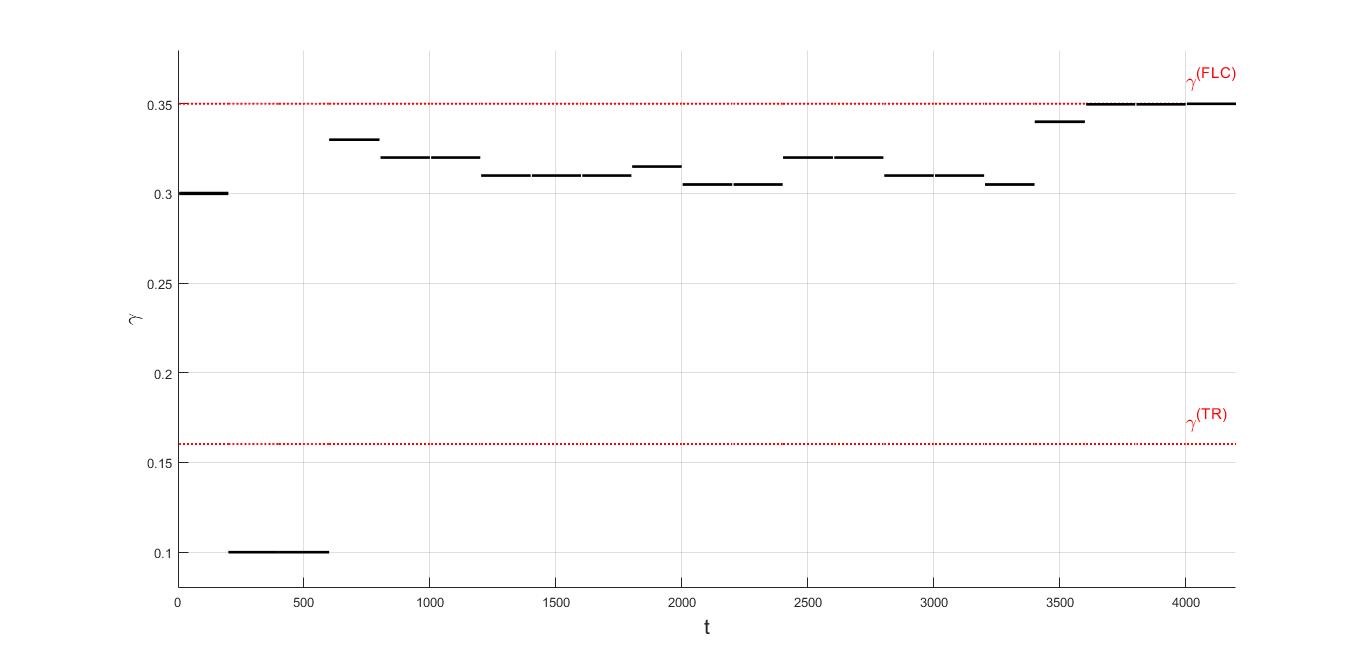}\\
\includegraphics[width=\textwidth]{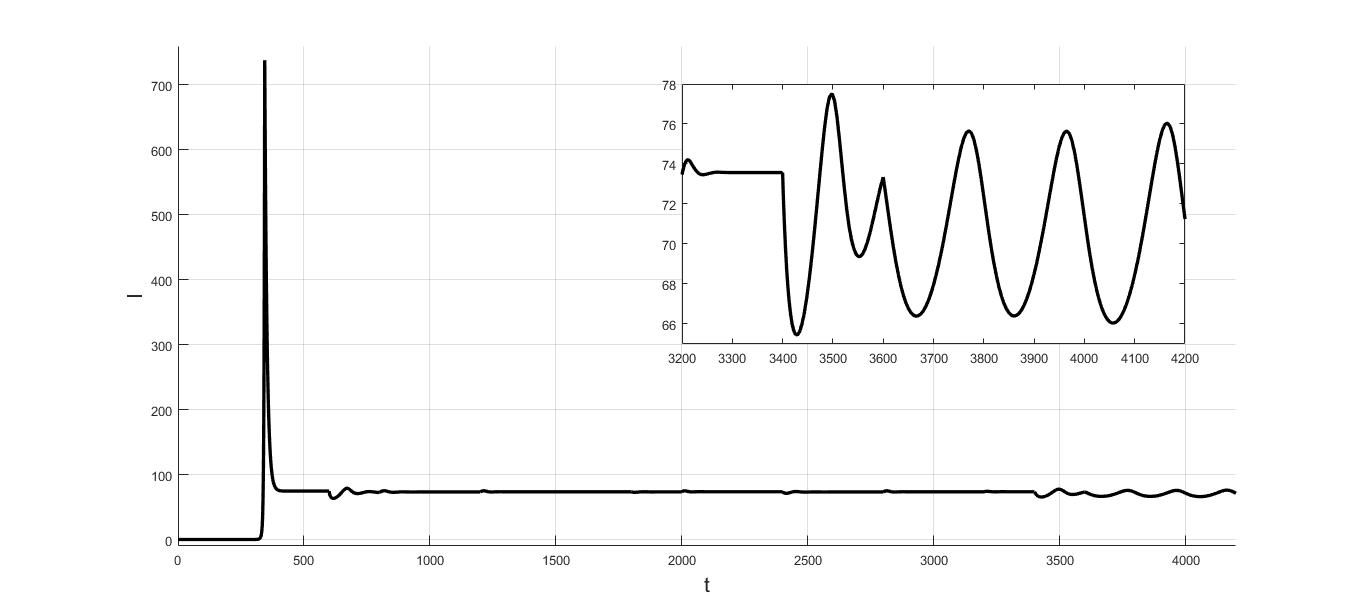}
\caption{\label{fig:scenario} Plots of $\gamma(t)$ versus $t$ (top), and $I(t)$ versus $t$ with a magnification in the region $[3200,4200]\times[65,78]$ (bottom).}
\end{figure}

Using the initial condition $\left(S_0,I_0,R_0\right)=(100,0.001,0)$, we obtain the plot of $I(t)$ versus $t$ displayed in Figure \ref{fig:scenario} (bottom). Let us now describe and interpret this plot biologically, and see that our model undergoes a \textit{hysteresis} along the cycle plotted in Figure \ref{fig:hysteresis}.

\subsubsection*{The disease-free beginning} First, we consider the period $0\leqslant t<200$. We begin with $I(0)=0.001$, essentially meaning that no individual in the system is initially infected, and assume that the susceptible individuals have a moderate cautiousness level: $\gamma(t)=0.3$ (case III in Table \ref{tab:summary}). Consequently, the disease-free equilibrium is stable, attracting the system (Figure \ref{fig:simulations} (c)), making it remain disease-free throughout the period. In Figure \ref{fig:hysteresis}, the system's current status is represented by point 1. 

\subsubsection*{The start of the pandemic} For $200\leqslant t<600$, we assume that individuals have become less cautious: $\gamma(t)=0.1<\gamma^{(\TR)}$ (case I in Table \ref{tab:summary}). For such a low cautiousness level, the disease-free equilibrium is unstable. The disease thus enters the system, quickly driving the system to the stable endemic equilibrium (Figure \ref{fig:simulations} (a)). In Figure \ref{fig:hysteresis}, the system is now at point 2.

\subsubsection*{The early effort for recovery} Next, we consider the period $600\leqslant t< 3600$. We assume that, for $600\leqslant t<800$, the cautiousness level is restored to its value before the start of the pandemic (in fact, slightly exceeding it): $\gamma(t)=0.33$ (case III in Table \ref{tab:summary}). This restoration, although results in a slight decrease in $I(t)$, does \textit{not} suffice to bring the system back to the disease-free equilibrium. Indeed, for this value of $\gamma(t)$ our system is \textit{bistable}: not only the disease-free equilibrium, but also the endemic equilibrium the system is currently in the vicinity of, are both present and stable (Figure \ref{fig:simulations} (c)). In Figure \ref{fig:hysteresis}, the system's status is now represented by point 3. The value of $\gamma(t)$ during the remaining period $800\leqslant t< 3600$, albeit changing, is within the range of case III in Table \ref{tab:summary}; thus the changes do not result in a qualitatively different orbital behaviour. In order to bring the system back to the disease-free equilibrium, effort must be made to further increase the cautiousness level: $\gamma(t)$ must exceed $\gamma^{(\FLC)}$ to achieve a situation whereby the disease-free equilibrium is the only attractor (cases VIII to X in Table \ref{tab:summary}).

\subsubsection*{The subsequent effort} In the period $3600\leqslant t<4200$, we assume an increased cautiousness level: $\gamma(t)=0.3497\in\left(\gamma^{(\HB)},\gamma^{(\HM)}\right)$ for $3600\leqslant t<4000$ (case IV in Table \ref{tab:summary}) and $\gamma(t)=0.35\in\left(\gamma^{(\HM)},\gamma^{(\FLC)}\right)$ for $4000\leqslant t<4200$ (case VI in Table \ref{tab:summary}). For these values of $\gamma(t)$, the system, being previously in the vicinity of the stable endemic equilibrium, now circulates around a stable periodic orbit (Figure \ref{fig:simulations} (d), (e), (h), (i)). The decreasing parts of the resulting oscillations, in reality, may suggest that the pandemic is settling down so that the system will soon become disease-free; this is certainly not the case.

\subsubsection*{What could happen next?} Either the value of $\gamma(t)$ remains below $\gamma^{(\FLC)}$ forever, or at some point it exceeds $\gamma^{(\FLC)}$. The former means that the system remains endemic forever, while the latter means that the system successfully returns to the disease-free state (cases VIII to X in Table \ref{tab:summary}; Figure \ref{fig:simulations} (l), (m), (n); point 4 in Figure \ref{fig:hysteresis}). Once this has taken place, individuals need not retain their high cautiousness level; they can safely reduce their cautiousness level, as long as it remains above $\gamma^{(\TR)}$, without bringing the system back to the endemic state. In practice, this could mean that, once the pandemic has settled down, individuals no longer need to observe strict health protocols. Indeed, the cautiousness level $\gamma(t)$ can safely be set as low as $0.17$. If it becomes $0.16$, however, the pandemic reattacks, and, as before, necessitates a significant effort to settle it down: $\gamma(t)$ must once again be brought to exceed $\gamma^{(\FLC)}$. This circulatory dynamical behaviour is best visualised as the hysteresis cycle in Figure \ref{fig:hysteresis}.

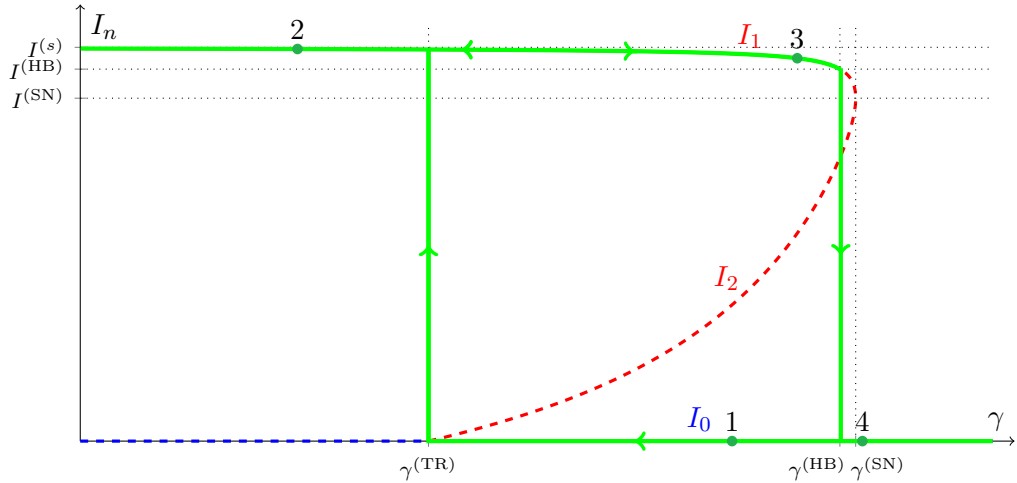
\begin{figure}\centering
\begin{tikzpicture}
\begin{axis}[
	xmin=0,
	xmax=0.43,
	ymin=0,
	ymax=83,
    xtick={0.1602903226,.3496365152,.3569024928},
    xticklabels={$\gamma^{(\TR)}$,$\gamma^{(\HB)}\,\,\,\,\,\,\,\,\,\,\,\,$,$\,\,\,\,\,\,\,\,\,\,\,\,\gamma^{(\SN)}$},    
    ytick={65.19550517,70.72118905,74.86959682},
    yticklabels={$I^{(\SN)}$,$I^{(\HB)}$,$I^{(s)}$},
	samples=100,
	xlabel=$\gamma$,
	ylabel=$I_n$,
	width=0.9\textwidth,
    height=0.475\textwidth,
    axis lines=middle,
    x axis line style=->,
    y axis line style=->,
    clip=false,
    tick label style={font=\scriptsize},
]
\addplot [domain = 0:70.72118905, samples = 300, very thick, red, dashed]
      ({(.4545454545*(x-74.63262880))*(x+10.00000002)*(x+12.10535604)/((x-74.86959682)*(x+12.14232409)*(x+28.18181818))}, {x});
\addplot [domain = 70.72118905:74.63262879, samples = 300, very thick, red]
      ({(.4545454545*(x-74.63262880))*(x+10.00000002)*(x+12.10535604)/((x-74.86959682)*(x+12.14232409)*(x+28.18181818))}, {x});
\addplot [domain = 0:0.1602903226, samples = 300, very thick, blue, dashed]
      ({x}, {0});
\addplot [domain = 0.1602903226:0.42, samples = 300, very thick, blue]
      ({x}, {0});
\draw[dotted] (axis cs:.1602903226,0) -- (axis cs:.1602903226,79);
\draw[dotted] (axis cs:.3496365152,0) -- (axis cs:.3496365152,79);
\draw[dotted] (axis cs:.3569024928,0) -- (axis cs:.3569024928,79);
\node[blue,above] at (axis cs:0.285,0) {$I_0$};
\node[red] at (axis cs:0.2975,31) {$I_2$};
\node[red] at (axis cs:0.3082443924,77) {$I_1$};
\draw[dotted] (axis cs:0,74.86959682)--(axis cs:0.42,74.86959682);
\draw[dotted] (axis cs:0,70.72118905)--(axis cs:0.42,70.72118905);
\draw[dotted] (axis cs:0,65.19550517)--(axis cs:0.42,65.19550517);

\draw[postaction={decorate},ultra thick,green] (axis cs:0.42,0)--(axis cs:0.1602903226,0)--(axis cs:0.1602903226,74.45478081);

\begin{scope}[ultra thick,decoration={
    markings,
    mark=at position 0.5 with {\arrow{>}}}
    ] 
    \draw[postaction={decorate},green] (axis cs:0.1602903226,0)--(axis cs:0.1602903226,74.45478081);
\addplot [postaction={decorate},domain = 70.62034708:74.63262879, samples = 300, green]
      ({(.4545454545*(x-74.63262880))*(x+10.00000002)*(x+12.10535604)/((x-74.86959682)*(x+12.14232409)*(x+28.18181818))}, {x});
    \draw[postaction={decorate},green] (axis cs:0.35005851840,70.8)--(axis cs:0.35005851840,0);
    \draw[postaction={decorate},green] (axis cs:0.2561744205,0)--(axis cs:0.2541744205,0);
    \draw[postaction={decorate},green] (axis cs:0.2541744205,74.12831636)--(axis cs:0.2561744205,74.11564743);
\end{scope}

\fill[darkgreen] (axis cs:0.3,0) node[above,black] {$1$} circle (2pt);
\fill[darkgreen] (axis cs:0.1,74.54612389) node[above,black] {$2$} circle (2pt);
\fill[darkgreen] (axis cs:0.33,72.79426361) node[above,black] {$3$} circle (2pt);
\fill[darkgreen] (axis cs:0.36,0) node[above,black] {$4$} circle (2pt);

\end{axis}
\end{tikzpicture}
\caption{\label{fig:hysteresis} The hysteresis cycle exhibited by our model \eqref{eq:model}.}
\end{figure}

\section{Conclusions and future research}\label{sec:conclusions}

We have studied an SIR-type model for the spread of COVID-19 in a population. The model incorporates as parameters the population's entrance and exit rates, incidence and recovery coefficients, the susceptible individuals' cautiousness level, and the hospitals' bed-occupancy rate. From its analysis, we draw three conclusions.

First, we have proved that the models' basic reproduction number does not depend on the bed-occupancy rate, suggesting that:
\begin{enumerate}
\item[(i)] merely increasing the hospitals' bed-occupancy rate (e.g., by establishing increasingly many makeshift hospitals) does not resolve the pandemic.
\end{enumerate}
Next, since suppressing the entrance rate or the incident coefficient could lead to economic damage, increasing the exit rates is undesirable, and increasing the recovery coefficient is difficult due to the limitedness of medical resources, the model has suggested that:
\begin{enumerate}
\item[(ii)] the best way to resolve the pandemic is to increase the susceptible individuals' cautiousness level.
\end{enumerate}
Most importantly, we have observed the importance of a high cautiousness level for resolving the pandemic. Indeed, the system, having been brought to an endemic state even by a slight drop of cautiousness level, cannot be recovered to disease-free by merely restoring the cautiousness level to its pre-endemic value. We conclude that:
\begin{enumerate}
\item[(iii)] for a successful annihilation of the pandemic, a high cautiousness level is crucial.
\end{enumerate}
More plainly, whether or not our wish that the pandemic soon ends will become a reality relies very largely on whether or not we are seriously cautious of the disease and consequently adhere to health protocols. It is \textit{not} sufficient for us to hold the cautiousness level that we had before the pandemic entered our country: perhaps merely being aware of the existence of the disease without implementing protective actions.

This research is extendible in a number of ways. In the numerical analysis, instead of varying only the cautiousness level, one could also vary the bed-occupancy rate, thereby investigating codimension-two bifurcations exhibited by the model. For increased values of the bed-occupancy rate, preliminary experiments reveal the occurrence of Bogdanov-Takens and generalised Hopf bifurcations, as well as the existence of a region whereby a stable endemic equilibrium is surrounded by an unstable limit cycle. 

Moreover, the model studied in this paper is simplified; it can be made more realistic by adding more compartments, e.g., those of quarantined, exposed, and/or asymptomatic individuals. In addition, one could take into account, e.g., the incubation period, and/or the possibility of reinfection, introducing a positive rate at which recovered individuals become susceptible.




\end{document}